\tikzstyle{measurement} = [draw, fill=white!20, rectangle, 
\tikzstyle{block} = [draw, fill=white!20, rectangle, 
\tikzstyle{block1} = [draw, fill=white!20, rectangle, 
\tikzstyle{sum} = [draw, fill=blue!20, circle, node distance=1cm]
\tikzstyle{input} = [coordinate]
\tikzstyle{output} = [coordinate]
\tikzstyle{pinstyle} = [pin edge={to-,thin,black}]
\theoremstyle{plain}
\newtheorem{thm}{\protect\theoremname}
\theoremstyle{plain}
\newtheorem{remark}{Remark}
\newtheorem{problem}{Problem}
\newtheorem{proposition}{Proposition}
\providecommand{\lemmaname}{Lemma}
\providecommand{\theoremname}{Theorem}
\newcommand{\epsA}{\epsilon_A}
\newcommand{\epsB}{\epsilon_B}
\newcommand{\indinf}{ {\infty \rightarrow \infty} }
\newcommand{\tp}{\top}
\newcommand{\xx}{\mathbf{x}}
\newcommand{\uu}{\mathbf{u}}
\newcommand{\ww}{\mathbf{w}}
\newcommand{\dsuper}[3]{#1^{#2,#3}}
\newcommand{\KK}{\mathbf{K}}
\newcommand{\sA}{\mathbf{A}}
\newcommand{\sB}{\mathbf{B}}
\newcommand{\sR}{\mathcal{R}}
\newcommand{\sQ}{\mathcal{Q}}
\newcommand{\Phix}{\mathbf{\Phi}_x}
\newcommand{\Phiu}{\mathbf{\Phi}_u}
\newcommand{\DDelta}{\mathbf{\Delta}}
\newcommand{\PPhi}{\mathbf{\Phi}}
\newcommand{\MM}{\mathbf{M}}
\newcommand{\DeltaVar}{\mathbf{\Delta}}
\newcommand{\tildeww}{{\tilde{\mathbf{w}}}}
\newcommand{\circled}[1]{\raisebox{.5pt}{\textcircled{\raisebox{-.9pt} {#1}}}}
\newcommand{\XX}{\mathbf{X}}
\newcommand{\YY}{\mathbf{Y}}
\title{{\LARGE \bf Robust Closed-loop Model Predictive Control \\ via System Level Synthesis}}
\author{Shaoru Chen$^\star$, Han Wang$^\star$, Manfred Morari, Victor M. Preciado,  Nikolai Matni
\thanks{Han Wang is with the Department of Applied Mathematics and Computational Science, University of Pennsylvania, Philadelphia, PA, 19104, USA (e-mail: wanghan2@sas.upenn.edu).
		
Shaoru Chen, Manfred Morari, Victor M. Preciado and Nikolai Matni are with the Department of Electrical and Systems Engineering, University of Pennsylvania, Philadelphia, PA, 19104, USA (e-mail: \{srchen, morari, preciado, nmatni\}@seas.upenn.edu).
		
$\star$The first two authors contributed equally to this paper.}} 
\date{}
\begin{document}
\pagestyle{plain}
\maketitle

\begin{abstract}
	
In this paper, we consider the robust closed-loop model predictive control (MPC) of a linear time-variant (LTV) system with norm bounded disturbances and LTV model uncertainty, wherein a series of constrained optimal control problems (OCPs) are solved. Guaranteeing robust feasibility of these OCPs is challenging due to disturbances perturbing the predicted states, and model uncertainty, both of which can render the closed-loop system unstable. As such, a trade-off between the numerical tractability and conservativeness of the solutions is often required. We use the System Level Synthesis (SLS) framework to reformulate these constrained OCPs over closed-loop system responses, and show that this allows us to transparently account for norm bounded additive disturbances and LTV model uncertainty by computing robust state feedback policies.  We further show that by exploiting the underlying linear fractional structure of the resulting robust OCPs, we can significantly reduce the conservativeness of existing SLS-based and tube-MPC-based robust control methods while also improving computational efficiency. We conclude with numerical examples demonstrating the effectiveness of our methods.

\end{abstract}

\section{Introduction}

Model predictive control (MPC) has achieved remarkable success in solving multivariable constrained control problems across a wide range of application areas, such as process control~\cite{qin2003survey}, automotive systems~\cite{hrovat2012development}, power networks~\cite{negenborn2007multi}, and robot locomotion~\cite{wieber2006trajectory}. In MPC, a control action is computed by solving a finite-horizon constrained optimal control problem (OCP) at each sampling time, and then applying the first control action. The stability and performance of MPC depends on the accuracy of the model being used, and indeed robustness to both additive disturbances and model uncertainty must be considered.  Although MPC using a nominal model (i.e., one ignoring uncertainty) offers some level of robustness~\cite{marruedo2002stability}, it has been shown that the closed-loop system achieved by nominal MPC can be destabilized by an arbitrarily small disturbance~\cite{grimm2004examples}. As a result, robust MPC, which explicitly deals with uncertainty, has received much attention~\cite{bemporad1999robust}.  

When only additive disturbances are present, open-loop robust MPC, which optimizes over a sequence of control actions $\mathbf{u} = \{ u_0, \cdots, u_{N-1} \}$ subject to suitably robust constraints, can be applied but tends to be overly conservative or even infeasible~\cite{mayne2000constrained}, as a single sequence of inputs $\mathbf{u}$ is chosen for all possible disturbance realizations. On the other hand, closed-loop MPC, which optimizes over the control policies $\pi = \{\pi_0(\cdot), \cdots, \pi_{N-1}(\cdot) \}$, can reduce the conservativeness of the solutions. However, the policy space is infinite-dimensional and renders the online OCPs intractable. The problem can be rendered tractable by restricting the policies $\pi$ to lie in a function class that admits a finite-dimensional parameterization.  For example,  
policies of the form $\pi_i(x) = Kx + v_i$ are considered in \cite{kouvaritakis2000efficient, schuurmans2000robust, lee2000robust}, where $K$ is a pre-stabilizing feedback gain $K$ that is fixed beforehand, thus reducing the decision variables to the vectors $\{v_1, \cdots, v_{N-1} \}$. To reduce conservativeness, an affine feedback control law $\pi(x) = K_i x + v_i$ can be applied with decision variables $K_i$ and $v_i$; however, the resulting OCP is non-convex.  In \cite{goulart2006optimization}, it was observed that by re-parameterizing the OCPs to be over disturbance based feedback policies of the form $\pi_i(w) = \sum_{j=0}^{i-1} M_{ij}w_j + v_i$, the resulting OCPs are convex. In~\cite{mayne2005robust,langson2004robust}, the authors propose an alternative (tube-MPC) invariant set based approach, which bounds the system trajectories within a tube that robustly satisfy constraints.

The more challenging problem considering model uncertainty is tackled in~\cite{kouvaritakis2000efficient, langson2004robust, kothare1996robust}. When polytopic or structured feedback model uncertainty occurs, a linear matrix inequality (LMI) based robust MPC method is proposed in~\cite{kothare1996robust}. When both model uncertainty and additive disturbances are present, the method proposed in~\cite{langson2004robust} designs tubes containing all possible trajectories under polytopic uncertainty assumptions. Alternative approaches based on dynamic programming (DP)~\cite{borrelli2017predictive} are shown to obtain tight solutions, but the computation quickly becomes intractable. Adaptive robust MPC, which considers estimation of the parametric uncertainty while implementing robust control, is proposed in~\cite{bujarbaruah2019semi, kim2018robust,lorenzen2019robust}.
\begin{table}
	\centering
	\footnotesize
	\caption{Comparison of the SLS, tube and DP based methods on robust OCP with norm-bounded uncertainty}
	\label{table:compare}
	\makebox[\textwidth]{\begin{tabular}{l|l|c|c|c}
		\toprule
		\multicolumn{2}{l|}{ } & Robust SLS  MPC & Tube MPC & Dynamic Programming \\ \midrule
		\multirow{2}{*}{LTV model uncertainty} & Memoryless & yes & yes & yes \\ 
		& With memory & yes & no & no \\ \midrule 
		\multicolumn{2}{l|}{Additive disturbance} & $\ell_\infty$ norm bounded & polytopic & polytopic \\ \midrule
		\multirow{2}{*}{Complexity} & No. of variables & $(T+1)^2n(n+m)$ & $T(1+n) + m +(T-1)mV$ & \multirow{2}{*}{exponential in problem size}\\ 
		& No. of constraints & $O(T(T+1)n^2/2)$  & $O(T(4mn)^n V H)$  & \\ \midrule
		\multicolumn{2}{l|}{Exactness} & See Section~\ref{sec:methodscomparison} & See Section~\ref{sec:methodscomparison} & exact \\
		\bottomrule
	\end{tabular}}
	\label{tab:multirow}
\end{table}

As described above, there is a rich body of work addressing the robust MPC problem, and it remains an active area of research for which no definitive solution exists. Due to the inherent intractability of the general robust MPC problem subject to both additive disturbance and model uncertainty, all of the aforementioned methods trade off conservativeness for computational tractability in different ways. The recently developed \emph{System Level Synthesis} (SLS) parameterization~\cite{ANDERSON2019364} provides an alternative approach to tackling the robust MPC problem and exploring this tradeoff space. The SLS framework transforms the OCP from one over control laws to one over closed-loop system responses, i.e., the linear maps taking the disturbance process to the states and inputs in closed loop.  Robust variants of the SLS parameterization allow for a transparent mapping of model uncertainty on system behavior, providing a transparent characterization of the joint effects of additive disturbances and model errors when solving OCPs~\cite{dean2019safely}. In this paper, we apply SLS to the robust MPC problem under both additive disturbance and LTV dynamic model uncertainty.\footnote{By dynamic model uncertainty, we mean that model perturbations may depend on past states as well. This will be formalized in Section~\ref{sec:formulation}} Our contributions are summarized below:
\begin{enumerate}
\item We propose and analyse an SLS based solution to the robust MPC problem for LTV systems subject to additive disturbances and LTV dynamic model uncertainty, and show that it allows a favorable tradeoff between conservativeness and computational complexity. 
\item We reduce the conservativeness of both existing SLS and tube MPC based robust control methods, while also reducing computational complexity, by exploiting the underlying linear fractional structure of the resulting robust OCPs. 
\end{enumerate}
Table~\ref{table:compare} summarizes the generality (as measured by the type of uncertainty the method can accommodate), computational complexity (as measured by the number of variables and constraints in the resulting OCP), and exactness of SLS, tube~\cite{langson2004robust}, and DP~\cite{borrelli2017predictive} based solutions to the robust MPC problem considered in this paper. The parameters $n, m, T, V, H$ denote the state dimension, input dimension, MPC horizon, the number of vertices of the tube, the number of inequalities to describe the polytopic tube, respectively. As can be seen, our approach compares favorably in terms of generality and computational complexity, and as we demonstrate in Section~\ref{sec:examples}, appears to outperform tube MPC in terms of conservativeness, especially over long horizons.

%

The remainder of the paper is organized as follows. Section~\ref{sec:formulation} states the robust MPC problem formulation. After introducing SLS preliminaries in Section~\ref{sec:SLSpreliminaries}, we develop the SLS formulation of robust MPC in Section~\ref{sec:SLSuncertainty}. Section~\ref{sec:examples} compares the proposed method with other robust MPC methods, and Section~\ref{sec:conclusion} concludes the paper.

\textbf{Notation}: 
$x_{i:j}$ is shorthand for the set $\{x_i, x_{i+1}, \cdots, x_{j} \}$. We use bracketed indices to denote the time of the real system and subscripts to denote the prediction time indices within an MPC loop, i.e., the system state at time $t$ is $x(t)$ and the $t$-th prediction is $x_t$. If not explicitly specified, the dimensions of matrices are compatible and can be inferred from the context. For two vectors $x$ and $y$, $x \leq y$ denotes element-wise comparison. For a symmetric matrix $Q$, $Q \succeq 0$ denotes that $Q$ is positive semidefinite. A linear, causal operator $\mathbf{R}$ defined over a horizon of $T$ is represented by 
\begin{equation} \label{eq:BLT}
\mathbf{R} = \begin{bmatrix}
\dsuper{R}{0}{0} & \ & \ & \ \\
\dsuper{R}{1}{1} & \dsuper{R}{1}{0} & \ & \ \\
\vdots & \ddots & \ddots & \ \\
\dsuper{R}{T}{T} & \cdots & \dsuper{R}{T}{1} & \dsuper{R}{T}{0}
\end{bmatrix}
\end{equation} 
where $\dsuper{R}{i}{j} \in \mathbb{R}^{p \times q}$ is a matrix of compatible dimension. We denote the set of such matrices by $\mathcal{L}_{TV}^{T, p \times q}$ and will drop the superscript $T, p \times q$ when it is clear from the context. $\mathbf{R}(i,:)$ denotes the $i$-th block row of $\mathbf{R}$ and $\mathbf{R}(:,j)$ denotes the $j$-th block column of $\mathbf{R}$, both indexing from $0$.

\section{Problem Formulation}
\label{sec:formulation}
In this paper, we consider robust MPC of the LTV system 
\begin{equation} \label{eq:RealSys}
	x(k+1) = A(k) x(k) + B(k) u(k) + w(k), k \geq 0,
\end{equation}
where $x(k) \in \mathbb{R}^n$ is the state, $u(k) \in \mathbb{R}^m$ is the control input, and $w(k) \in \mathbb{R}^n$ is the disturbance at time $k$. The matrices $(A(k), B(k))$ denote the real system matrices at time $k$, but are unknown. Instead, we assume that a nominal model  $(\hat{A}(k), \hat{B}(k))$ is available, and that the model errors are bounded in terms of the $\indinf$ induced norm as:
\begin{equation} \label{eq:UncertaintySet}
	\lVert \Delta_A(k) \rVert_\indinf \leq \epsA, \lVert \Delta_B(k) \rVert_\indinf \leq \epsB, \forall k \geq 0
\end{equation}
where $\Delta_A(k) = \hat{A}(k) - A(k), \Delta_B(k) = \hat{B}(k) - B(k)$. Further, we assume the disturbance is norm-bounded by:
\begin{equation} \label{eq:wSet}
	w(k) \in \mathcal{W} = \lbrace w \in \mathbb{R}^n \mid \lVert w \rVert_\infty \leq \sigma_w \rbrace.
\end{equation}

In robust MPC, a series of finite horizon robust constrained OCPs are solved, with the current state $x(k)$ set as the initial condition. Motivated by the norm-bounded uncertainty in~\eqref{eq:UncertaintySet}, in each MPC loop, we consider the following more general form of uncertainty described by linear, causal operators and formulate our problem as follows:

\begin{problem} \label{prob:robustConstrControl}
Consider the robust MPC of an LTV system over horizon $T$ with additive disturbances and LTV dynamic model uncertainty. For each OCP, consider uncertainty operators $\DDelta_A, \DDelta_B \in \mathcal{L}_{TV}^T$. Let ${\Delta_A}_t = \DDelta_A(t,:), {\Delta_B}_t = \DDelta_B(t,:)$ and ${\Delta_A}_t(x_{0:t}) = \sum_{i = 0}^t \dsuper{\Delta_A}{t}{t-i} x_i, {\Delta_B}_t(u_{0:t}) = \sum_{i = 0}^t \dsuper{\Delta_B}{t}{t-i} u_i$. At time $k$, denote the current state as $x(k)$. Solve the following robust optimal control problem:
\begin{equation} \label{eq:robustMPCprob}
	\begin{aligned}
 \min_{\mathbf{\pi} \in \Pi  } & \quad J(x(k), \pi)\\
 \text{s.t.} & 
	\left.
	\begin{array}{l}
	x_{t + 1} = \hat{A}_t x_t + {\Delta_A}_t (x_{0:t}) + \hat{B}_t u_t + {\Delta_B}_t(u_{0:t}) + w_t \\
	u_t = \pi_t(x_{0:t}, u_{0:t-1}) \\
	x_t \in \mathcal{X}, u_t \in \mathcal{U}, x_T \in \mathcal{X}_T
	\end{array}
	\right\} \\
& \quad \forall w_t \in \mathcal{W}, \forall t = 0, 1, \cdots, T - 1 \\
 & \quad  \forall \lVert {\DDelta_A} \rVert_\indinf \leq \epsilon_A, \forall \lVert {\DDelta_B} \rVert_\indinf \leq \epsilon_B   \\
 & \quad  x_0 = x(k)
	\end{aligned}
\end{equation}
where $\mathbf{\pi} = \pi_{0:T-1} \in \Pi$ for $\Pi$ the set of causal control policies, $J(x(k), \pi)$ is the cost function to be specified, $(\hat{A}_t, \hat{B}_t)$ is the known LTV nominal model, ${\DDelta_A}, {\DDelta_B}$ are the norm-bounded uncertainty operators and $\mathcal{X}, \mathcal{U}, \mathcal{X}_T$ are polytopic state, input and terminal constraints defined as
\begin{equation*} \label{eq:constraints}
\begin{aligned}
&\mathcal{X} = \lbrace x \in \mathbb{R}^{n} \mid F_x x \leq b_x \rbrace, \  
\mathcal{U} = \lbrace u \in \mathbb{R}^{m} \mid F_u u \leq b_u \rbrace,\\
&\mathcal{X}_T = \lbrace x \in \mathbb{R}^{n} \mid F_T x \leq b_T \rbrace.
\end{aligned}
\end{equation*}
Additionally, we define the disturbance set $\mathcal{W}$ as in~\eqref{eq:wSet}. 
\end{problem}
\begin{remark} \label{remark:memoryless}
The robust OCP formulation~\eqref{eq:robustMPCprob} contains the uncertainty model \eqref{eq:UncertaintySet} as a special case when the predictive model is of the form
\begin{equation*}
x_{t + 1} = (\hat{A}_t + \dsuper{\Delta_A}{t}{0}) x_t + (\hat{B}_t + \dsuper{\Delta_B}{t}{0})u_t + w_t
\end{equation*}
where we restrict $\DDelta_A, \DDelta_B$ to be block diagonal. 
\end{remark}

In Problem~\ref{prob:robustConstrControl}, the cost $J(x(k), \pi)$ is a function of the initial condition and the applied control policies. For simplicity, we apply the nominal cost $J_{\text{nom}}(x(k), \pi)$:
\begin{equation*} \label{eq:nominalcost}
\begin{array}{rl}
J_{\text{nom}}(x(k), \pi) & =  \sum_{t = 0}^{T - 1} (\bar{x}_t^\tp Q_t \bar{x}_t + u_t^\tp R_t u_t ) + \bar{x}_T^\tp Q_T \bar{x}_T \\
\text{s.t.} \ & \bar{x}_{t+1} = \hat{A}_t \bar{x}_t + \hat{B}_t u_t,  u_t = \pi_t(\bar{x}_{0:t}, u_{0:t-1}) \\
& \bar{x}_0 = x(k), \quad \forall t = 0, 1, \cdots, T-1
\end{array}
\end{equation*}
for $\bar{x}_t$ the nominal trajectory and $Q_t \succeq 0, R_t \succ 0, Q_T \succeq 0$  the state, input and terminal weight matrices, respectively.

We remark that in Problem~\ref{prob:robustConstrControl} we optimize over the closed-loop control policies $\pi\in \Pi$ instead of open-loop control sequences $u_{0:T-1}$, as open-loop policies are overly conservative~\cite{bemporad1999robust}. Since the space of all causal control policies is infinite-dimensional, we restrict our search to causal LTV state feedback controllers.\footnote{Affine feedback control policies can be implemented as linear feedback controllers by augmenting the state to $\tilde{x} = [x; 1]$. As such, we restrict our discussion to linear feedback policies without loss of generality.}

\section{System Level Synthesis}
\label{sec:SLSpreliminaries}

In this section, we review relevant concepts of SLS, and show how it can be used to reformulate OCPs over closed-loop system responses. A more extensive review of SLS can be found in~\cite{ANDERSON2019364}.

\subsection{Finite Horizon System Level Synthesis}
For the dynamics in~\eqref{eq:RealSys} and a fixed horizon $T$, we define $\xx$, and $\uu$ as the stacked states $x_{0:T}$, and inputs $u_{0:T}$, up to time $T$, i.e.,
\begin{equation*} 
\xx^\top = \begin{bmatrix} x_0^\top & x_1^\top & \cdots & x_T^\top \end{bmatrix}, \quad 
\uu^\top = \begin{bmatrix} u_0^\top & u_1^\top & \cdots & u_T^\top \end{bmatrix}.
\end{equation*}
We also let $\ww = [x_0^\tp \ w_0^\tp \cdots w_{T-1}^\tp]^\tp$, and note that we embed $x_0$ as the first component of the disturbance process. Let $u_t$ be a causal LTV state feedback controller, i.e., $u_t  = K_t(x_{0:t}) = \sum_{i = 0}^{t} \dsuper{K}{t}{t - i} x_i$. Then we have $\uu = \KK \xx$ with $\KK \in \mathcal{L}_{TV}^{T, m \times n}$.
We can concatenate the dynamics matrices as $\sA = \text{blkdiag}(A_0, \cdots, A_{T-1}, 0), \sB = \text{blkdiag}(B_0, \cdots, B_{T-1}, 0)$. 
Let $Z$ be the block-downshift operator, i.e., a matrix with the identity matrix on the first block subdiagonal and zeros elsewhere. Under the feedback controller $\KK$, the closed-loop behavior of the system~\eqref{eq:RealSys} over the horizon of length $T$ can be represented as 
\begin{equation}
	\xx = Z(\sA + \sB\KK)\xx + \ww
\end{equation}
and the closed-loop transfer functions describing $\ww \mapsto (\xx, \uu)$  are given by 
\begin{equation} \label{eq:clMapK}
\begin{bmatrix}
\xx \\
\uu
\end{bmatrix} 
=
\begin{bmatrix}
(I - Z(\sA + \sB \KK))^{-1} \\
\KK (I - Z(\sA + \sB \KK))^{-1}
\end{bmatrix} \ww.
\end{equation}
These maps are called \emph{system responses}, as they describe the closed-loop system behavior. Note that as $Z$ is the block-downshift operator, the matrix inverse in equation~\eqref{eq:clMapK} exists. 

Instead of optimizing over feedback controllers $\KK$, SLS allows for a direct optimization over system responses $\Phix, \Phiu$ defined as 
\begin{equation} \label{eq:clMapPhi}
	    \begin{bmatrix}  \xx \\  \uu \end{bmatrix}  =
	\begin{bmatrix}  \Phix \\ \Phiu \end{bmatrix} \ww
\end{equation} 
where $\Phix \in \mathcal{L}_{TV}^{T, n \times n}, \Phiu \in \mathcal{L}_{TV}^{T, m \times n}$ are two block-lower-triangular matrices by exploiting the following theorem. 
\begin{thm}~\cite[Theorem 2.1]{ANDERSON2019364} \label{thm:equivalence}
Over the horizon $ t = 0, 1, \cdots, T$, for the system dynamics~\eqref{eq:RealSys} with the block-lower-triangular state feedback control law $\KK \in \mathcal{L}_{TV}$ defining the control action as $\uu = \KK \xx$, the following are true:
\begin{enumerate}
	\item The affine subspace defined by 
	\begin{equation} \label{eq:PhiAffineSpace}
	\begin{bmatrix} I - Z \sA & -Z \sB \end{bmatrix} 
	\begin{bmatrix} \Phix \\ \Phiu \end{bmatrix} = I, \ \Phi_x, \Phi_u \in \mathcal{L}_{TV}
	\end{equation}
	parameterizes all possible system responses~\eqref{eq:clMapPhi}.
	\item For any block-lower-triangular matrices $\lbrace \Phix, \Phiu \rbrace \in \mathcal{L}_{TV}$ satisfying~\eqref{eq:PhiAffineSpace}, the controller $\KK = \Phiu \Phix^{-1} \in \mathcal{L}_{TV}$ achieves the desired response. 
\end{enumerate}
\end{thm}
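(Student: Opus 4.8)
The plan is to prove the two claims as the two directions of a set equality between the realizable system responses and the solution set of the affine constraint~\eqref{eq:PhiAffineSpace}. Throughout, the single structural fact I would lean on is that $Z$ is the block-downshift operator, so $Z\sA$ and $Z\sB$ are \emph{strictly} block-lower-triangular; consequently $I - Z\sA$, and more generally $I - Z(\sA + \sB\KK)$ for any $\KK \in \mathcal{L}_{TV}$, is block-lower-triangular with identity blocks on the diagonal, hence invertible, and its inverse again lies in $\mathcal{L}_{TV}$. This is precisely what makes the matrix inverse in~\eqref{eq:clMapK} well-defined and keeps every object inside the finite-horizon, triangular algebra $\mathcal{L}_{TV}$.

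For the first claim (necessity), I would start from the closed-loop map~\eqref{eq:clMapK} induced by an arbitrary causal controller $\KK \in \mathcal{L}_{TV}$, setting $\Phix = (I - Z(\sA + \sB\KK))^{-1}$ and $\Phiu = \KK\Phix$. Both are products and inverses of block-lower-triangular matrices, so $\Phix, \Phiu \in \mathcal{L}_{TV}$ as required by~\eqref{eq:clMapPhi}. Left-multiplying the definition of $\Phix$ gives $(I - Z(\sA+\sB\KK))\Phix = I$; distributing and substituting $\Phiu = \KK\Phix$ yields $(I - Z\sA)\Phix - Z\sB\Phiu = I$, which is exactly~\eqref{eq:PhiAffineSpace}. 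Hence every realizable response solves the affine constraint.

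For the second claim (sufficiency / achievability), I would take any $\Phix, \Phiu \in \mathcal{L}_{TV}$ satisfying~\eqref{eq:PhiAffineSpace} and first argue that $\Phix$ is invertible. Reading off the block-diagonal part of~\eqref{eq:PhiAffineSpace}, the terms $Z\sA\Phix$ and $Z\sB\Phiu$ are strictly block-lower-triangular, so the diagonal blocks of $\Phix$ must coincide with those of $I$; thus $\Phix$ is block-lower-triangular with identity diagonal blocks and is therefore invertible with $\Phix^{-1} \in \mathcal{L}_{TV}$. Defining $\KK = \Phiu\Phix^{-1}$ then gives a causal controller $\KK \in \mathcal{L}_{TV}$, and substituting $\Phiu = \KK\Phix$ back into~\eqref{eq:PhiAffineSpace} gives $(I - Z(\sA + \sB\KK))\Phix = I$, i.e.\ $\Phix = (I - Z(\sA+\sB\KK))^{-1}$ and $\Phiu = \KK\Phix$, recovering precisely the responses~\eqref{eq:clMapK}.

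The main obstacle is the invertibility of $\Phix$ in the sufficiency direction and, more generally, keeping careful track of which objects remain in $\mathcal{L}_{TV}$ under inversion and multiplication. Everything hinges on the nilpotent, strictly-lower-triangular structure induced by $Z$; once that is isolated as a lemma (the diagonal blocks of any solution $\Phix$ equal $I$, and the block-lower-triangular matrices with identity diagonal are closed under multiplication and inversion within $\mathcal{L}_{TV}$), both directions reduce to the short algebraic manipulations above. Since the horizon $T$ is finite, no analytic convergence or well-posedness concerns arise beyond this triangular linear algebra.
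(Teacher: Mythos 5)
Your proof is correct: the necessity direction follows by reading off $\Phix=(I-Z(\sA+\sB\KK))^{-1}$, $\Phiu=\KK\Phix$ from~\eqref{eq:clMapK}, and the sufficiency direction hinges on exactly the right observation, namely that~\eqref{eq:PhiAffineSpace} forces $\Phix = I + Z\sA\Phix + Z\sB\Phiu$ to have identity diagonal blocks (since $Z$ makes the correction terms strictly block-lower-triangular), so $\Phix$ is invertible within $\mathcal{L}_{TV}$ and $\KK=\Phiu\Phix^{-1}$ reproduces the closed-loop maps. The paper does not prove this statement itself but cites~\cite[Theorem 2.1]{ANDERSON2019364}, and your argument is essentially the standard proof given there, so there is nothing to add.
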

Theorem~\ref{thm:equivalence} states that the problem of controller synthesis can be equivalently posed as one over closed-loop system responses, by setting $\xx = \Phix \ww$, $\uu = \Phiu \ww$, and constraining that the system responses lie in the affine space \eqref{eq:PhiAffineSpace}. The map~\eqref{eq:clMapPhi} characterizes the effects of disturbances $\ww$ on the states $\xx$ and inputs $\uu$ and thus allows a direct translation of the state and input constraints into constraints on $\Phix, \Phiu$. 

\begin{remark}
Theorem~\ref{thm:equivalence} reveals the equivalence between optimizing over the state feedback controller $\uu = \KK\xx$ and the disturbance feedback controller $\uu = \Phi_u \ww$, when there is no model uncertainty in problem~\eqref{eq:robustMPCprob}. Such equivalence has also been shown in~\cite{goulart2006optimization} as one of the main results.
\end{remark}


\subsection{Robustness in System Level Synthesis}
A useful result in characterizing the robustness of the closed-loop system responses is presented here. When the  constraint~\eqref{eq:PhiAffineSpace} is not exactly satisfied, the system responses can be described in the following theorem:
\begin{thm}~\cite[Theorem 2.2]{ANDERSON2019364} \label{thm:robust}
Let $\bar{\DDelta} \in \mathcal{L}_{TV}$ be an arbitrary block-lower-triangular matrix as in equation~\eqref{eq:BLT}. Suppose $\{\Phix, \Phiu\} \in \mathcal{L}_{TV}$ satisfy
\begin{equation} \label{eq:DDeltaEq}
\begin{bmatrix} I - Z \sA & -Z \sB \end{bmatrix} 
\begin{bmatrix} \Phix \\ \Phiu \end{bmatrix} = I - \bar{\DDelta}.
\end{equation}
If $(I - \dsuper{\bar{\Delta}}{i}{0})^{-1}$ exists for $i = 0, \cdots, T$, then the controller $\KK = \Phiu \Phix^{-1} \in \mathcal{L}_{TV}$ achieves the system response
\begin{equation} \label{eq:robustSysResp}
    \begin{bmatrix}  \xx \\  \uu \end{bmatrix}  =
	\begin{bmatrix}  \Phix \\ \Phiu \end{bmatrix} ( I - \bar{\DDelta})^{-1} \ww.
\end{equation}
\end{thm}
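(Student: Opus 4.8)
The plan is to reduce the robust statement to the exact parameterization of Theorem~\ref{thm:equivalence} by absorbing the perturbation $\bar{\DDelta}$ into the system responses. Concretely, I would introduce the modified responses $\tilde{\PPhi}_x = \Phix(I - \bar{\DDelta})^{-1}$ and $\tilde{\PPhi}_u = \Phiu(I - \bar{\DDelta})^{-1}$, and argue that this pair is an \emph{exact} (unperturbed) solution of the affine subspace~\eqref{eq:PhiAffineSpace} that realizes precisely the controller $\KK = \Phiu\Phix^{-1}$. The claimed response~\eqref{eq:robustSysResp} would then follow immediately from the second part of Theorem~\ref{thm:equivalence} applied to $\{\tilde{\PPhi}_x, \tilde{\PPhi}_u\}$.

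First I would verify the nominal constraint. Right-multiplying the hypothesis~\eqref{eq:DDeltaEq} by $(I - \bar{\DDelta})^{-1}$ gives
\begin{equation*}
\begin{bmatrix} I - Z\sA & -Z\sB \end{bmatrix}
\begin{bmatrix} \tilde{\PPhi}_x \\ \tilde{\PPhi}_u \end{bmatrix}
= (I - \bar{\DDelta})(I - \bar{\DDelta})^{-1} = I,
\end{equation*}
so that $\{\tilde{\PPhi}_x, \tilde{\PPhi}_u\}$ lies in~\eqref{eq:PhiAffineSpace}, provided it is block-lower-triangular. Next I would check that the induced controller is unchanged:
\begin{equation*}
\tilde{\PPhi}_u \tilde{\PPhi}_x^{-1} = \Phiu(I - \bar{\DDelta})^{-1}(I - \bar{\DDelta})\Phix^{-1} = \Phiu\Phix^{-1} = \KK .
\end{equation*}
Theorem~\ref{thm:equivalence}(2) then guarantees that this $\KK$ realizes $\xx = \tilde{\PPhi}_x\ww$ and $\uu = \tilde{\PPhi}_u\ww$, which is exactly~\eqref{eq:robustSysResp}.

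The crux of the argument — and the reason the hypothesis on the diagonal blocks is stated explicitly — is establishing that $(I - \bar{\DDelta})^{-1}$ exists \emph{and} is itself block-lower-triangular, so that the products $\tilde{\PPhi}_x, \tilde{\PPhi}_u$ remain in $\mathcal{L}_{TV}$ and Theorem~\ref{thm:equivalence} is applicable. Here I would invoke the standard fact that a block-lower-triangular matrix is invertible with block-lower-triangular inverse precisely when all of its diagonal blocks are invertible; since the diagonal blocks of $I - \bar{\DDelta}$ are the $I - \dsuper{\bar{\Delta}}{i}{0}$, the assumed invertibility of each $(I - \dsuper{\bar{\Delta}}{i}{0})$ supplies exactly this (one sees the triangular structure of the inverse directly by block forward-substitution, or by splitting $I-\bar{\DDelta}$ into its block-diagonal part and a strictly-block-lower-triangular, hence nilpotent, part and expanding). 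I would also note that this same hypothesis makes $\KK$ well-defined: reading off the diagonal blocks of~\eqref{eq:DDeltaEq} and using that $Z\sA, Z\sB$ are strictly block-lower-triangular shows that the $i$-th diagonal block of $\Phix$ equals $I - \dsuper{\bar{\Delta}}{i}{0}$, so $\Phix$ is invertible as well. With these structural facts in hand, the remaining content is just the two one-line identities above.
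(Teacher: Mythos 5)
Your argument is correct and is essentially the standard proof of this result: the paper itself does not prove Theorem~\ref{thm:robust} but cites it from \cite[Theorem 2.2]{ANDERSON2019364}, where the same device is used --- absorb the perturbation by defining $\tilde{\PPhi}_x = \Phix(I-\bar{\DDelta})^{-1}$, $\tilde{\PPhi}_u = \Phiu(I-\bar{\DDelta})^{-1}$, check that this pair satisfies the exact constraint~\eqref{eq:PhiAffineSpace} and induces the same controller $\KK = \Phiu\Phix^{-1}$, and invoke Theorem~\ref{thm:equivalence}(2). Your handling of the structural points (block-lower-triangularity of $(I-\bar{\DDelta})^{-1}$ and invertibility of $\Phix$ via its diagonal blocks $I - \dsuper{\bar{\Delta}}{i}{0}$) correctly supplies the details the citation leaves implicit.
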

Theorem~\ref{thm:robust} describes how the perturbations in the constraint affect the closed-loop responses. Next, we use $\bar{\DDelta}$ to capture the uncertainty in the system dynamics, and the expression~\eqref{eq:robustSysResp} affords us a transparent mapping of its effects on system behavior.


\section{Robust MPC with Model Uncertainty}
\label{sec:SLSuncertainty}

In this section, we show how SLS can be applied to the robust MPC problem in the general setting of additive disturbance and model uncertainty, and show that a suitable relaxation of the resulting robust SLS OCP results in a quasi-convex optimization problem.

\subsection{Robustness in Nominal Control Synthesis}
For the nominal model $\hat{\sA} = \text{blkdiag}(\hat{A}_0, \cdots, \hat{A}_{T-1}, 0)$ and $\hat{\sB} = \text{blkdiag}(\hat{B}_0, \cdots, \hat{B}_{T-1}, 0)$, consider block-lower-triangular matrices $ \{ \hat{\mathbf{\Phi}}_x, \hat{\mathbf{\Phi}}_u \}$ satisfying:
\begin{equation} \label{eq:hatPhixConstr}
\begin{bmatrix} I - Z \hat{\sA} & -Z \hat{\sB} \end{bmatrix}
\begin{bmatrix} \hat{\mathbf{\Phi}}_x \\ \hat{\mathbf{\Phi}}_u \end{bmatrix} = I, \ \hat{\mathbf{\Phi}}_x, \hat{\mathbf{\Phi}}_u \in \mathcal{L}_{TV}.
\end{equation}
The nominal responses $\{\hat{\mathbf{\Phi}}_x, \hat{\mathbf{\Phi}}_u \}$ approximately satisfy the constraint~\eqref{eq:PhiAffineSpace} with respect to the real dynamics $\sA = \hat{\sA} + \DDelta_A,\,  \sB =  \hat{\sB} + \DDelta_B$, where the uncertainty operators $\{ \DDelta_A, \DDelta_B \}$ are defined in Problem~\ref{prob:robustConstrControl}. This can be seen by rewriting~\eqref{eq:hatPhixConstr} as 
\begin{equation} \label{eq:inexact}
\begin{aligned}
\begin{bmatrix} I - Z \sA & -Z \sB \end{bmatrix}
\begin{bmatrix} \hat{\mathbf{\Phi}}_x \\ \hat{\mathbf{\Phi}}_u \end{bmatrix} & = I - Z \begin{bmatrix} \DDelta_A & \DDelta_B \end{bmatrix} \begin{bmatrix} \hat{\mathbf{\Phi}}_x \\ \hat{\mathbf{\Phi}}_u \end{bmatrix} \\
& = I - \DeltaVar \hat{\PPhi} 
\end{aligned}
\end{equation}
with the notation
\begin{equation} \label{eq:defPhiDelta}
	\hat{\mathbf{\Phi}} = \begin{bmatrix}
	\hat{\mathbf{\Phi}}_x \\ \hat{\mathbf{\Phi}}_u
	\end{bmatrix}, \quad \DeltaVar = Z \begin{bmatrix}
	\DDelta_A & \DDelta_B
	\end{bmatrix}.
\end{equation}

Thus, $\{ \hat{\mathbf{\Phi}}_x, \hat{\mathbf{\Phi}}_u \}$ satisfy an inexact constraint~\eqref{eq:DDeltaEq} for the system $(\sA, \sB)$ with $\bar{\DDelta} := \DeltaVar \hat{\mathbf{\Phi}}$. Since $\bar{\DDelta}$ is a strict block-lower-triangular matrix and $(I - \dsuper{\bar{\Delta}}{i}{0})^{-1}$ exists for all $i$, by Theorem~\ref{thm:robust} the closed-loop system response of the system $(\sA, \sB)$ with the controller $\hat{\KK} = \hat{\mathbf{\Phi}}_u \hat{\mathbf{\Phi}}_x^{-1}$ is given by

\begin{equation} \label{eq:clMapUncertainty}
\begin{aligned}
\begin{bmatrix}  \xx \\  \uu \end{bmatrix} & = \hat{\mathbf{\Phi}} ( I - \DeltaVar \hat{\mathbf{\Phi}})^{-1} \ww \\
& = (\hat{\mathbf{\Phi}} + \hat{\mathbf{\Phi}} \DeltaVar( I - \hat{\mathbf{\Phi}} \DeltaVar)^{-1} \hat{\mathbf{\Phi}}) \ww
\end{aligned}
\end{equation}
where the second equality is an application of the Woodbury matrix identity~\cite{horn2012matrix}.
Noting that the second line of equation~\eqref{eq:clMapUncertainty} is a linear fractional transform (LFT) of $\DDelta$ with an approximately constructed augmented plant defined in terms of the system response $\hat{\PPhi}$, we leverage tools from robust control~\cite{Dahleh1994ControlOU} to account for model uncertainty while introducing a minimal amount of conservativeness in a computationally efficient way. 

\subsection{Upper Bound through Robust Performance}
\label{sec:RobustPerformance}
In this section, we provide an easily-verified sufficient condition for the $\ell_{\indinf} \rightarrow \ell_{\indinf}$ norm of the system response~\eqref{eq:clMapUncertainty} to be upper bounded by a constant. This condition exploits the LFT structure of~\eqref{eq:clMapUncertainty} and is inspired by the algebraic robust performance conditions in~\cite{khammash1991performance}.

Define the perturbation set $\DDelta_c = \{ \DDelta \in \mathcal{L}_{TV} \mid \lVert \DDelta \rVert_\indinf \leq 1, \DDelta \text{ is strictly causal}  \}$. 
\begin{proposition}~\cite{dullerud2013course} \label{prop:equivalence}
Consider System I and System II in Fig.~\ref{fig:sysIandII} where
\begin{equation} \label{eq:Mdef}
	\MM = \begin{bmatrix} \MM_{11} & \MM_{12} \\ \MM_{21} & \MM_{22} \end{bmatrix}, \quad
	\widetilde{\DDelta} = \begin{bmatrix}
	\DDelta_1 & \\ & \DDelta_2
	\end{bmatrix}
\end{equation}
with $\DDelta_1, \DDelta_2 \in \DDelta_c$, $\MM_{ij} \in \mathcal{L}_{TV}, i, j \in \{1, 2\} $. The matrices are of compatible dimensions. Then the following statements are equivalent:
\begin{enumerate}
	\item In System I, $(I - \MM_{22} \DDelta_1)^{-1}$ exists and 
	\begin{equation}
		\lVert \MM_{11} + \MM_{12} \DDelta_1 (I - \MM_{22}\DDelta_1)^{-1}\MM_{21} \rVert_\indinf < 1
	\end{equation}
	for all $\DDelta_1 \in \DDelta_c$.
	\item In System II, $(I - \MM \widetilde{\DDelta})^{-1}$ exists for all $\widetilde{\DDelta} = \text{blkdiag}(\DDelta_1, \DDelta_2), \DDelta_1 , \DDelta_2 \in \DDelta_c$.
\end{enumerate}
\end{proposition}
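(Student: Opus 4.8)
The plan is to recognize this as an instance of the \emph{main loop theorem} of robust control and to prove it by closing the two feedback channels of System~II sequentially, thereby reducing the robust well-posedness of the block-diagonal interconnection to a single-block small-gain condition that is exactly the robust-performance statement of System~I. Throughout I would treat $\MM$ and the $\DDelta_i$ as bounded causal operators and use that $\DDelta_c$ is the unit ball of the $\lVert\cdot\rVert_\indinf$-induced norm restricted to strictly causal operators; in particular $0 \in \DDelta_c$.

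First I would establish the algebraic backbone: a Schur-complement factorization of $I - \MM\widetilde{\DDelta}$. Ordering the blocks of $\widetilde{\DDelta}$ conformally with the partition of $\MM$ so that $\DDelta_1$ is fed back around the $\MM_{22}$ channel, the interconnection operator takes the block form
\begin{equation*}
I - \MM\widetilde{\DDelta} = \begin{bmatrix} I - \MM_{11}\DDelta_2 & -\MM_{12}\DDelta_1 \\ -\MM_{21}\DDelta_2 & I - \MM_{22}\DDelta_1 \end{bmatrix}.
\end{equation*}
Eliminating the second block via the Schur complement of the $(2,2)$ entry shows that $I - \MM\widetilde{\DDelta}$ is invertible if and only if $(I - \MM_{22}\DDelta_1)^{-1}$ exists and
\begin{equation*}
I - \bigl(\MM_{11} + \MM_{12}\DDelta_1 (I - \MM_{22}\DDelta_1)^{-1}\MM_{21}\bigr)\DDelta_2
\end{equation*}
is invertible. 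Writing $N(\DDelta_1) := \MM_{11} + \MM_{12}\DDelta_1 (I - \MM_{22}\DDelta_1)^{-1}\MM_{21}$ for the lower LFT appearing in System~I, the latter is precisely $(I - N(\DDelta_1)\DDelta_2)^{-1}$. The separate inner condition that $(I - \MM_{22}\DDelta_1)^{-1}$ exists is not lost: specializing $\DDelta_2 = 0 \in \DDelta_c$ collapses the factorization to a block-triangular operator and shows that well-posedness of System~II already forces $(I - \MM_{22}\DDelta_1)^{-1}$ to exist for every $\DDelta_1 \in \DDelta_c$, so that $N(\DDelta_1)$ is everywhere well defined.

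Next I would close the outer loop with the small-gain theorem. For each fixed $\DDelta_1 \in \DDelta_c$, the operator $N(\DDelta_1)$ is bounded and causal, and the $\lVert\cdot\rVert_\indinf$ small-gain theorem states that $(I - N(\DDelta_1)\DDelta_2)^{-1}$ exists for every strictly causal $\DDelta_2$ with $\lVert\DDelta_2\rVert_\indinf \le 1$ if and only if $\lVert N(\DDelta_1)\rVert_\indinf < 1$. Assembling the quantifiers then yields the equivalence: by the factorization, System~II is well posed for all $\DDelta_1,\DDelta_2 \in \DDelta_c$ iff, for every $\DDelta_1 \in \DDelta_c$, both $(I - \MM_{22}\DDelta_1)^{-1}$ exists and $(I - N(\DDelta_1)\DDelta_2)^{-1}$ exists for all $\DDelta_2 \in \DDelta_c$, and by small gain the latter is equivalent to $\lVert N(\DDelta_1)\rVert_\indinf < 1$, which is exactly statement~1. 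The labeling of which copy plays the inner versus outer role is immaterial, since statement~2 ranges over all pairs in $\DDelta_c \times \DDelta_c$.

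I expect the main obstacle to be the necessity direction of the small-gain step: showing that $\lVert N(\DDelta_1)\rVert_\indinf \ge 1$ produces a strictly causal destabilizing $\DDelta_2 \in \DDelta_c$ rendering $I - N(\DDelta_1)\DDelta_2$ singular. This worst-case construction is the genuinely nontrivial content, and it is the part I would import from the cited $\ell_\infty$-gain robustness theory~\cite{dullerud2013course}. By contrast, the Schur-complement factorization and the sufficiency direction are routine operator algebra once well-posedness of the inner loop is secured; the only care needed there is to justify the block manipulations as operator identities and to keep the quantifier ordering (fix $\DDelta_1$, then range over $\DDelta_2$) consistent between the two statements.
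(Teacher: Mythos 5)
The paper gives no proof of this proposition at all; it is imported verbatim from \cite{dullerud2013course}, so there is no internal argument to compare yours against. Your reconstruction is the standard main-loop-theorem proof and its skeleton is correct: the Schur-complement factorization of $I - \MM\widetilde{\DDelta}$, the specialization $\DDelta_2 = 0$ to show that well-posedness of System~II already forces $(I-\MM_{22}\DDelta_1)^{-1}$ to exist, and the reduction of the outer loop to a single-block small-gain condition are exactly the right moves; your reordering of the diagonal blocks so that $\DDelta_1$ closes the $\MM_{22}$ channel is also the correct way to make the figure and the algebra conformal.

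The step you defer is, however, not a safely importable technicality in this paper's setting. The direction the paper actually uses (in the proof of Theorem~\ref{thm:robustPerformance} it establishes statement~2 and then invokes the proposition to conclude statement~1) is precisely the necessity half of small gain: $\lVert N(\DDelta_1)\rVert_\indinf \ge 1$ must produce a destabilizing $\DDelta_2 \in \DDelta_c$ rendering $I - N(\DDelta_1)\DDelta_2$ singular. But here $\DDelta_c$ is the set of \emph{strictly causal} elements of the finite-horizon class $\mathcal{L}_{TV}$, i.e., strictly block-lower-triangular matrices; for any such $\DDelta_2$ the product $N(\DDelta_1)\DDelta_2$ is nilpotent, so $I - N(\DDelta_1)\DDelta_2$ is invertible no matter how large $\lVert N(\DDelta_1)\rVert_\indinf$ is. Over this class statement~2 is vacuously true while statement~1 is not (take $\MM_{11}=2I$ and the other blocks zero), so the worst-case perturbation you plan to import from the infinite-horizon $\ell_1$ robustness theory simply does not exist in the class the paper works with. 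Your argument (and the proposition) is sound in the infinite-horizon operator setting of the cited reference, or if the causality restriction on the perturbation is dropped; but as a proof of the statement literally as posed over $\mathcal{L}_{TV}^{T}$, the necessity direction of the small-gain step is exactly where it breaks, and that caveat should be stated rather than absorbed into the citation.
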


\begin{figure}
	\begin{subfigure}[b]{0.49 \columnwidth}\hfil
\begin{tikzpicture}[auto, node distance=2.2cm,>=latex']
\node [input, name=u]{$u$};
\node [block, right of=u] (G) {$\MM$};
\node [output, right of=G, name=x] {$x$};
\node [block1, below of=G, node distance=1.5cm] (phi) {$\DDelta_1$};
\node [input, right of=phi,name=xx]{$x$};

\draw (0.85,0.4) node[]{$\ww$};  
\draw[->] (0.65,0.25) to (1.85,0.25){};
\draw (3.35,0.4) node[]{$\mathbf{z}$};  
\draw[->] (2.55,0.25) to (3.6,0.25){};
\draw[] (2.55,-0.15) to (3.5,-0.15){};

\draw[] (3.5,-0.15) to (3.5,-1.5){};
\draw[->] (3.5,-1.5) to (2.55,-1.5){};
\draw[] (1.85,-1.5) to (0.75,-1.5){};

\draw[] (0.75,-1.5) to (0.75,-0.15){};

\draw[->] (0.75,-0.15) to (1.85,-0.15){};



\end{tikzpicture}
\caption{System I}
	\end{subfigure} 
\begin{subfigure}[b]{0.49 \columnwidth} \hfil
	\begin{tikzpicture}[auto, node distance=2.2cm,>=latex']
	\node [input, name=u]{$u$};
	\node [block, right of=u] (G) {$\MM$};
	\node [output, right of=G, name=x] {$x$};
	\node [block1, below of=G, node distance=1.5cm] (phi) {$\widetilde{\DDelta}$};
	\node [input, right of=phi,name=xx]{$x$};


	\draw[] (2.55,-0.05) to (3.5,-0.05){};
	
	\draw[] (3.5,-0.05) to (3.5,-1.5){};
	\draw[->] (3.5,-1.5) to (2.55,-1.5){};
	\draw[] (1.85,-1.5) to (0.75,-1.5){};
	
	\draw[] (0.75,-1.5) to (0.75,-0.05){};
	
	\draw[->] (0.75,-0.05) to (1.85,-0.05){};
	\end{tikzpicture}
	\caption{System II}
\end{subfigure} 
\caption{Left: System I. Right: System II.}
\label{fig:sysIandII}
\end{figure}
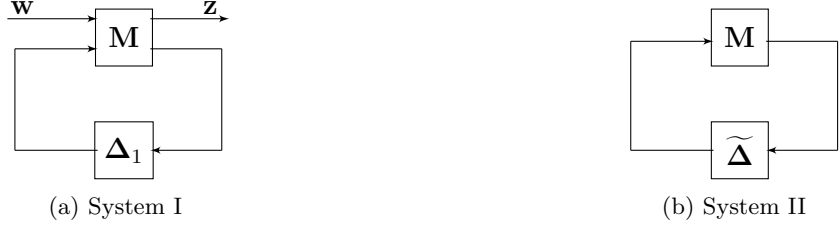

We state our sufficient condition in the following theorem:
\begin{thm}\label{thm:robustPerformance}
Consider the matrices $\hat{\PPhi}$ and $\DeltaVar$ in~\eqref{eq:clMapUncertainty}. Let $\lVert \DeltaVar \rVert_\indinf \leq \epsilon$ for some $\epsilon > 0$. For any $\beta > 0 $ and matrix $\mathbf{P} \in \mathcal{L}_{TV}$ of compatible dimensions, we have 
\begin{equation} \label{eq:robustRespConstr}
	\lVert \mathbf{P} \hat{\PPhi} + \mathbf{P} \hat{\PPhi} \DeltaVar (I - \hat{\PPhi} \DeltaVar)^{-1} \hat{\PPhi} \rVert_\indinf \leq \beta, \forall \lVert \DeltaVar \rVert_\indinf \leq \epsilon
\end{equation}
\begin{equation} \label{eq:robustPerfConstr} 
\begin{aligned}
&\text{if} \quad \quad \quad \quad \quad \lVert \mathbf{P} \hat{\PPhi} \rVert_\indinf + \beta \epsilon \lVert \hat{\PPhi} \rVert_\indinf \leq \beta. 
\end{aligned}
\end{equation}
\end{thm}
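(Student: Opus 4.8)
The plan is to collapse the linear fractional transform appearing in~\eqref{eq:robustRespConstr} into a single operator product and then bound it directly by submultiplicativity of the $\indinf$ induced norm. First I would observe that, reversing the Woodbury identity already used in~\eqref{eq:clMapUncertainty}, the quantity inside the norm equals $\mathbf{P}\hat{\PPhi}(I - \DeltaVar\hat{\PPhi})^{-1}$, i.e.
\[
\mathbf{P}\hat{\PPhi} + \mathbf{P}\hat{\PPhi}\DeltaVar(I - \hat{\PPhi}\DeltaVar)^{-1}\hat{\PPhi} = \mathbf{P}\hat{\PPhi}(I - \DeltaVar\hat{\PPhi})^{-1}.
\]
Since $\DeltaVar = Z[\DDelta_A\ \DDelta_B]$ is strictly causal, $\DeltaVar\hat{\PPhi}$ is a strictly block-lower-triangular (hence nilpotent) element of $\mathcal{L}_{TV}$, so this inverse always exists over the finite horizon and the rewrite is unconditional; this removes the well-posedness concern entirely.

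Next I would chain submultiplicativity with the Neumann (geometric) series bound. This gives
\[
\norm{\mathbf{P}\hat{\PPhi}(I - \DeltaVar\hat{\PPhi})^{-1}}_\indinf \le \norm{\mathbf{P}\hat{\PPhi}}_\indinf\,\norm{(I - \DeltaVar\hat{\PPhi})^{-1}}_\indinf \le \frac{\norm{\mathbf{P}\hat{\PPhi}}_\indinf}{1 - \norm{\DeltaVar\hat{\PPhi}}_\indinf},
\]
where the final step uses $\norm{(I - \DeltaVar\hat{\PPhi})^{-1}}_\indinf \le \sum_{k\ge 0}\norm{\DeltaVar\hat{\PPhi}}_\indinf^k$, valid whenever $\norm{\DeltaVar\hat{\PPhi}}_\indinf < 1$. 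Bounding $\norm{\DeltaVar\hat{\PPhi}}_\indinf \le \norm{\DeltaVar}_\indinf\norm{\hat{\PPhi}}_\indinf \le \epsilon\norm{\hat{\PPhi}}_\indinf$ and noting that the right-hand side above is increasing in $\norm{\DeltaVar\hat{\PPhi}}_\indinf$, the worst case over $\norm{\DeltaVar}_\indinf\le\epsilon$ yields $\norm{\mathrm{LFT}}_\indinf \le \norm{\mathbf{P}\hat{\PPhi}}_\indinf / (1 - \epsilon\norm{\hat{\PPhi}}_\indinf)$. I would then close the argument by a pure rearrangement: dividing hypothesis~\eqref{eq:robustPerfConstr} by $\beta>0$ shows $\epsilon\norm{\hat{\PPhi}}_\indinf \le 1$, and strictly $\epsilon\norm{\hat{\PPhi}}_\indinf < 1$ whenever $\norm{\mathbf{P}\hat{\PPhi}}_\indinf>0$, so that $1 - \epsilon\norm{\hat{\PPhi}}_\indinf > 0$. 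The hypothesis is then \emph{exactly} equivalent to $\norm{\mathbf{P}\hat{\PPhi}}_\indinf \le \beta(1 - \epsilon\norm{\hat{\PPhi}}_\indinf)$, i.e.\ to the chained bound being $\le\beta$, which is~\eqref{eq:robustRespConstr}. The degenerate case $\mathbf{P}\hat{\PPhi}=0$ is immediate, since then the left-hand side of~\eqref{eq:robustRespConstr} is identically zero.

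The main obstacle is mostly bookkeeping rather than conceptual: justifying the geometric-series bound and the division by $1 - \epsilon\norm{\hat{\PPhi}}_\indinf$ requires controlling the sign of that quantity, which is precisely why the positivity of $\beta$ and the additive structure of~\eqref{eq:robustPerfConstr} must be invoked carefully (and why the $\mathbf{P}\hat{\PPhi}=0$ edge case is handled separately). The second, more delicate point is reconciling this direct estimate with the robust-performance viewpoint that the section sets up. The same condition can be recovered through Proposition~\ref{prop:equivalence} by casting $\tfrac1\beta\,\mathrm{LFT}$ as System~I with $\MM = \big[\begin{smallmatrix} \tfrac1\beta\mathbf{P}\hat{\PPhi} & \tfrac1\beta\mathbf{P}\hat{\PPhi} \\ \epsilon\hat{\PPhi} & \epsilon\hat{\PPhi}\end{smallmatrix}\big]$ and $\DDelta_1 = \DeltaVar/\epsilon \in \DDelta_c$, then certifying well-posedness of System~II via the small-gain theorem (using $\norm{\widetilde{\DDelta}}_\indinf \le 1$ for block-diagonal $\widetilde{\DDelta}$). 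Here the subtlety is that a naive bound on $\norm{\MM}_\indinf$ introduces a spurious factor from the repeated blocks, so one must insert a diagonal scaling $\mathrm{blkdiag}(d_1 I, d_2 I)$ and minimize $\norm{D\MM D^{-1}}_\indinf$; a balancing scale exists exactly when~\eqref{eq:robustPerfConstr} holds strictly, recovering the identical condition (with a limiting argument covering the non-strict boundary).

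Given the cleanliness and tightness of the product-norm estimate, I would present the direct argument of the first two paragraphs as the proof and relegate the Proposition~\ref{prop:equivalence} interpretation to a remark, since both deliver the same sufficient condition~\eqref{eq:robustPerfConstr}.
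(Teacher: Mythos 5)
Your proof is correct, but it takes a genuinely different route from the paper's. The paper never collapses the LFT: it normalizes by $\beta$, casts the expression as the robust performance channel of System~I with $\MM_{11}=\MM_{12}=\tfrac{1}{\beta}\mathbf{P}\hat{\PPhi}$ and $\MM_{21}=\MM_{22}=\epsilon\hat{\PPhi}$, invokes Proposition~\ref{prop:equivalence} to reduce robust performance to robust well-posedness of System~II, and certifies the latter via the small gain theorem after optimizing a diagonal scaling $D_p\MM D_q^{-1}$ --- that scaling step is what removes the repeated-block slack and produces exactly $\norm{\XX}_\indinf+\norm{\YY}_\indinf<1$. You instead exploit the repeated-block structure head-on: reversing the Woodbury step of~\eqref{eq:clMapUncertainty} collapses the expression to $\mathbf{P}\hat{\PPhi}(I-\DeltaVar\hat{\PPhi})^{-1}$ (a valid push-through identity), nilpotency of the strictly causal $\DeltaVar\hat{\PPhi}$ settles well-posedness for free over the finite horizon, and submultiplicativity plus the Neumann series gives the bound $\norm{\mathbf{P}\hat{\PPhi}}_\indinf/(1-\epsilon\norm{\hat{\PPhi}}_\indinf)$, which rearranges to the identical condition~\eqref{eq:robustPerfConstr}. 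What each buys: your argument is shorter and self-contained (no Proposition~\ref{prop:equivalence}, no $D$-scaling), and it handles the non-strict boundary of~\eqref{eq:robustPerfConstr} directly --- the paper's chain establishes the strict implication $\norm{\XX}_\indinf+\norm{\YY}_\indinf<1\Rightarrow\norm{\cdot}_\indinf<1$, so the stated ``$\leq$'' version technically requires a limiting argument, whereas you only need the separate (and correctly handled) $\mathbf{P}\hat{\PPhi}=0$ case together with the observation that $1-\epsilon\norm{\hat{\PPhi}}_\indinf>0$ otherwise. The paper's argument is the one that generalizes: it applies to LFTs in which $\MM_{12}\neq\MM_{11}$ or $\MM_{21}\neq\MM_{22}$, where no such collapse is available, and it situates the result within the $\ell_1$ robust performance framework that the section deliberately invokes; your own third paragraph correctly reconstructs that alternative route, including the need for the balancing scale.
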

\begin{proof}
	See Appendix.
\end{proof}
Theorem~\ref{thm:robustPerformance} provides a sufficient condition~\eqref{eq:robustPerfConstr} which is \emph{convex} in $\hat{\PPhi}$ for the robust norm bound constraint on the system responses~\eqref{eq:robustRespConstr} to be satisfied. 

\subsection{Robust Optimization Formulation} 
\label{sec:RobustOpt}
We leverage Theorems~\ref{thm:equivalence} $\sim$ \ref{thm:robustPerformance}, and equation~\eqref{eq:clMapUncertainty} to reformulate Problem~\ref{prob:robustConstrControl} as a robust optimization problem over the system responses. The remaining challenge is to separate out the effects of the \emph{known} initial condition $x_0$ from the \emph{unknown but adversarial} future disturbance $w_{0:T-1}$. First, we concatenate all the constraints $x_t \in \mathcal{X}, u_t \in \mathcal{U}, x_T \in \mathcal{X}_T$ in Problem~\eqref{eq:robustMPCprob} on $x_t$ and $u_t$ over the horizon $T$ as
\begin{equation} \label{eq:stackconstr}
	F \begin{bmatrix}
	\xx \\ \uu
	\end{bmatrix} \leq b,
\end{equation} 
where
\begin{equation}
\begin{aligned}
& F = \text{blkdiag}(F_x, \cdots, F_x, F_T, F_u, \cdots, F_u), \\
& b = [b_x^\tp \ \cdots \ b_x^\tp \ b_T^\tp \ b_u^\tp \ \cdots \ b_u^\tp]^\tp.
\end{aligned}
\end{equation}
We decompose $\PPhi, \DeltaVar$, and $\ww$ as follows to separate the effects of the known initial condition $x_0$ from the unknown future disturbances $w_{0:T-1}$:
\begin{equation}\label{eq:matStructure}
\begin{aligned}
& \PPhi = \begin{bmatrix} \Phix \\ \Phiu \end{bmatrix} = \left[ \begin{array}{c | c} \PPhi^0 & \PPhi^\tildeww \end{array} \right] = 
\left[ \begin{array}{c | c} \Phix^0 & \Phix^\tildeww \\ \hline \Phiu^0 & \Phiu^\tildeww \end{array} \right], \ww = \begin{bmatrix} x_0 \\ \tildeww \end{bmatrix}, \\
& \DeltaVar = \left[  \begin{array}{c}
\DeltaVar^0  \\ \hline
\DeltaVar^\tildeww
\end{array} \right] 
= \left[ \begin{array}{c | c}
\DeltaVar_A & \DeltaVar_B
\end{array}     \right] 
= \left[ \begin{array}{c | c}
\DeltaVar_A^0 & \DeltaVar_B^0 \\ \hline \DeltaVar_A^\tildeww & \DeltaVar_B^\tildeww
\end{array}   \right],
\end{aligned}
\end{equation}
where $\PPhi^0$ is the first block column of $\PPhi$, $\DeltaVar^0$ is the first block row of $\DeltaVar$ and all other block matrices are of compatible dimensions. The main result of this paper is presented below:
\begin{thm} \label{thm:robustConvexRelaxation}
	Consider the convex optimization problem formulated below:
\begin{subequations}\label{eq:robustRelaxation}
	\begin{align} 
	\begin{split}
	\underset{\PPhi}{\min} & \quad \Bigg\lVert 
	\begin{bmatrix} \sQ^{\frac{1}{2}} & \ \\ \ & \sR^{\frac{1}{2}} \end{bmatrix} 
	\begin{bmatrix} \Phix(:,0) \\ \Phiu(:,0) \end{bmatrix} x_0
	\Bigg\rVert_2^2 \label{constr:obj} 	\end{split} \\
	\begin{split}
	\text{s.t.} & \quad \begin{bmatrix} I - Z \hat{\sA} & -Z \hat{\sB} \end{bmatrix}
	\begin{bmatrix} \Phix \\ \Phiu \end{bmatrix} = I \label{constr:affine} \end{split}\\
	\begin{split}
	& \quad  F_j^\tp \PPhi^0 x_0 +\lVert F_j^\tp \PPhi^\tildeww\rVert_1 \frac{1 - \tau^T}{1 - \tau} \gamma + \beta \sigma_w \leq b_j, \forall j \label{constr:safety} \end{split} \\
	\begin{split}
	& \quad \lVert F_j^\tp \PPhi^\tildeww \rVert_\indinf + \beta\lVert \epsilon \PPhi^\tildeww \rVert_\indinf \leq \beta, \forall j  \label{constr:beta} \end{split} \\
	\begin{split}
	& \quad \Big\lVert \begin{bmatrix} \epsilon_A \Phix^\tildeww \\  \epsilon_B \Phiu^\tildeww \end{bmatrix} \Big\rVert_\indinf \leq \tau/2 \label{constr:tau} \end{split}\\
	\begin{split}
	& \quad \Big\lVert \begin{bmatrix} \epsilon_A \Phix^0 \\  \epsilon_B \Phiu^0 \end{bmatrix} x_0 \Big\rVert_\infty \leq \gamma/2 \label{constr:gamma} \end{split}
	\end{align}
\end{subequations}
	with constants $\epsilon = \epsA + \epsB, \tau > 0, \gamma > 0$ and $\beta > 0$. Any controller synthesized from a feasible solution $\PPhi$ to~\eqref{eq:robustRelaxation} for any $(\tau, \gamma, \beta)$ guarantees the constraints satisfaction under all possible model uncertainty and disturbances in~\eqref{eq:robustMPCprob}.
\end{thm}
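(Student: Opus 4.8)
The plan is to show that any $\PPhi$ feasible for~\eqref{eq:robustRelaxation} yields a controller $\KK = \Phiu\Phix^{-1}$ under which the realized trajectory satisfies the stacked constraint $F[\xx;\uu]\leq b$ of~\eqref{eq:stackconstr} for \emph{every} admissible uncertainty $\DeltaVar$ (with $\|\DeltaVar\|_\indinf\leq\epsilon$, $\epsilon=\epsA+\epsB$) and disturbance $\tildeww$ (with $\|\tildeww\|_\infty\leq\sigma_w$). First I would note that the affine constraint~\eqref{constr:affine} makes $\PPhi$ a valid nominal response, and that $\Phix$ inherits identity diagonal blocks from~\eqref{constr:affine} and is therefore invertible, so $\KK$ is well defined (Theorem~\ref{thm:equivalence}). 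The argument of Section~\ref{sec:SLSuncertainty} leading to~\eqref{eq:clMapUncertainty}, via Theorem~\ref{thm:robust} applied with $\bar{\DDelta}=\DeltaVar\PPhi$ (strictly block-lower-triangular, so the required inverses exist), then gives the realized map $[\xx;\uu]=\PPhi(I-\DeltaVar\PPhi)^{-1}\ww$. Rewriting this in fixed-point form $[\xx;\uu]=\PPhi\ww+\PPhi\DeltaVar[\xx;\uu]$ and setting $\mathbf z:=\DeltaVar[\xx;\uu]$, the key structural observation is that $\DeltaVar=Z[\DDelta_A\ \DDelta_B]$ is strictly causal, so the first block-row of $\mathbf z$ vanishes; with the partition~\eqref{eq:matStructure} this reads $\mathbf z=[0;\tilde{\mathbf z}]$, hence $[\xx;\uu]=\PPhi^0 x_0+\PPhi^\tildeww(\tildeww+\tilde{\mathbf z})$. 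This vanishing first block is exactly what decouples the known $x_0$ from the adversarial $\tildeww$.

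By linearity I would split the response as $[\xx;\uu]=\mathbf y^0+\mathbf y^w$ with $\mathbf y^0=(I-\PPhi\DeltaVar)^{-1}\PPhi^0 x_0$ and $\mathbf y^w=(I-\PPhi\DeltaVar)^{-1}\PPhi^\tildeww\tildeww$ (both inverses existing by nilpotency of the strictly-causal $\PPhi\DeltaVar$), and bound $F_j^\tp\mathbf y^0$ and $F_j^\tp\mathbf y^w$ separately for each constraint row $j$. For the disturbance term the robust-performance reasoning of Theorem~\ref{thm:robustPerformance}, specialized to the $\tildeww$-columns with $\mathbf P=F_j^\tp$, turns the self-consistent estimate $\|\tilde{\mathbf z}^w\|_\infty\leq\epsilon\|\PPhi^\tildeww\|_\indinf(\|\tildeww\|_\infty+\|\tilde{\mathbf z}^w\|_\infty)$ into $F_j^\tp\mathbf y^w\leq \tfrac{\|F_j^\tp\PPhi^\tildeww\|_\indinf}{1-\epsilon\|\PPhi^\tildeww\|_\indinf}\|\tildeww\|_\infty$, so that~\eqref{constr:beta} is precisely the condition rendering this $\leq\beta\sigma_w$.

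For the initial-condition term I would exploit nilpotency directly. Letting $S$ drop the first block-row and eliminating $\mathbf y^0$ gives the loop equation $\tilde{\mathbf z}^0=S\DeltaVar\PPhi^0 x_0+(S\DeltaVar\PPhi^\tildeww)\tilde{\mathbf z}^0$; since $S\DeltaVar\PPhi^\tildeww$ is strictly causal it is nilpotent of degree $T$, whence $\tilde{\mathbf z}^0=\sum_{k=0}^{T-1}(S\DeltaVar\PPhi^\tildeww)^k S\DeltaVar\PPhi^0 x_0$. Bounding $\|S\DeltaVar\PPhi^0 x_0\|_\infty\leq\epsA\|\Phix^0 x_0\|_\infty+\epsB\|\Phiu^0 x_0\|_\infty\leq\gamma$ via~\eqref{constr:gamma}, and $\|S\DeltaVar\PPhi^\tildeww\|_\indinf\leq\epsA\|\Phix^\tildeww\|_\indinf+\epsB\|\Phiu^\tildeww\|_\indinf\leq\tau$ via~\eqref{constr:tau} (the factors $1/2$ account for splitting the gain across the $A$ and $B$ channels), yields $\|\tilde{\mathbf z}^0\|_\infty\leq\sum_{k=0}^{T-1}\tau^k\gamma=\tfrac{1-\tau^T}{1-\tau}\gamma$, and hence by H\"older $F_j^\tp\mathbf y^0\leq F_j^\tp\PPhi^0 x_0+\|F_j^\tp\PPhi^\tildeww\|_1\tfrac{1-\tau^T}{1-\tau}\gamma$.

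Adding the two bounds gives $F_j^\tp[\xx;\uu]\leq F_j^\tp\PPhi^0 x_0+\|F_j^\tp\PPhi^\tildeww\|_1\tfrac{1-\tau^T}{1-\tau}\gamma+\beta\sigma_w$, which is $\leq b_j$ by~\eqref{constr:safety}; since this holds for every $j$ and every admissible $(\DeltaVar,\tildeww)$, all constraints are robustly satisfied (the objective~\eqref{constr:obj} is the nominal cost and plays no role in this claim). I expect the main obstacle to be the initial-condition analysis: recognizing that the perturbation signal $\mathbf z$ has a vanishing first block, so that $x_0$ never re-enters the uncertainty loop directly, and then converting the strict causality of $\DeltaVar$ into the nilpotency that produces the \emph{finite} geometric factor $\tfrac{1-\tau^T}{1-\tau}$ rather than an $\tfrac{1}{1-\tau}$ that would require $\tau<1$. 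Some care is also needed to justify distributing $\epsilon$ consistently across $\epsA$ and $\epsB$ throughout.
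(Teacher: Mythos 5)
Your proposal is correct and follows essentially the same route as the paper's proof: the same decomposition of the perturbed response into the $x_0$-driven and $\tildeww$-driven parts (enabled by $\DeltaVar^0=0$), the robust-performance condition of Theorem~\ref{thm:robustPerformance} yielding the $\beta\sigma_w$ bound via~\eqref{constr:beta}, the nilpotency/Neumann-series argument with H\"older's inequality producing the finite geometric factor $\frac{1-\tau^T}{1-\tau}\gamma$, and the equal split of the uncertainty gain across the $A$ and $B$ channels explaining the factors of $1/2$ in~\eqref{constr:tau} and~\eqref{constr:gamma}. The only cosmetic difference is that you re-derive the disturbance bound by a self-consistent fixed-point estimate rather than quoting the LFT form directly, which is equivalent.
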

\begin{proof}
See Appendix.
\end{proof} 

Despite the existence of three hyperparameters $(\tau, \gamma, \beta)$, in practice \eqref{eq:robustRelaxation} can be effectively solved by grid searching over $(\tau, \gamma, \beta)$. To reduce the range over which the grid search must be conducted, upper and lower bounds on $(\tau, \gamma, \beta)$ can be efficiently found via bisection, which has $O(\log(1/\epsilon_{tol}))$ complexity in terms of the tolerance $\epsilon_{tol}$. Algorithm~\ref{alg:bisection} describes this procedure using the standard bisection algorithm \textbf{bisect}. When applying \textbf{bisect}, a variant of~\eqref{eq:robustRelaxation}, which is a linear program, is constructed with a smaller set of selected constraints whose feasibility guides the search of the lower or upper bounds on the hyperparameters. Since~\eqref{constr:tau},~\eqref{constr:beta} are independent of $x_0$, the lower bounds on $\tau, \beta$ obtained from Algorithm~\ref{alg:bisection} are valid for all $x_0$ and can be computed offline. 
\begin{algorithm}
	\caption{Find lower and upper bounds on $(\tau, \gamma, \beta)$}\label{alg:bisection}
	\hspace*{\algorithmicindent} \textbf{Input:} $\epsilon_{tol} > 0,$ range($\tau, \gamma, \beta$) \\
	\hspace*{\algorithmicindent} \textbf{Output:} $lb(\tau, \gamma, \beta)$, $ub(\tau, \gamma, \beta)$ 
	\begin{algorithmic}[1]
		\Procedure{LowerUpperBounds}{$\tau, \gamma, \beta$}~\footnotemark
		\State $lb(\tau) = $\textbf{ bisect}($\tau, \{ \min_\PPhi  0 \ \text{s.t. } \eqref{constr:affine}, \eqref{constr:tau} \}$)
		\State $lb(\gamma) = $\textbf{ bisect}($\gamma, \{ \min_\PPhi  0 \ \text{s.t. } \eqref{constr:affine}, \eqref{constr:gamma} \}$)
		\State $lb(\beta) = $\textbf{ bisect}($\beta, \{ \min_\PPhi  0 \ \text{s.t. } \eqref{constr:affine}, \eqref{constr:beta} \}$)
		\State In~\eqref{constr:safety}, let $\tau = lb(\tau), \gamma = lb(\gamma)$
		\State $ub(\beta) = $\textbf{ bisect}($\beta, \{ \min_\PPhi  0 \ \text{s.t. } \eqref{constr:affine}, \eqref{constr:safety} \}$)
		\State In~\eqref{constr:safety}, let $\gamma = lb(\gamma), \beta = lb(\beta)$
		\State $ub(\tau) = $\textbf{ bisect}($\tau, \{ \min_\PPhi 0 \ \text{s.t. } \eqref{constr:affine}, \eqref{constr:safety} \}$)
		\State $ub(\gamma) = \frac{1 - ub(\tau)^T}{1 - ub(\tau)} lb(\gamma)/\frac{1 - lb(\tau)^T}{1 - lb(\tau)} $
		\State \textbf{return} $lb(\tau, \gamma, \beta), ub(\tau, \gamma, \beta)$ 
		\EndProcedure
	\end{algorithmic}
\end{algorithm}
\footnotetext{All the bisection algorithms search over the range($\tau, \gamma, \beta$) and use $\epsilon_{tol}$ for termination as specified in the input.}

Furthermore, with Algorithm~\ref{alg:bisection} we can verify the infeasibility of the relaxation~\eqref{eq:robustRelaxation} for all choices of hyperparameters if the contradiction $lb(\cdot) > ub(\cdot)$ is observed for $\tau, \gamma$ or $\beta$. With fixed hyperparameters, problem~\eqref{eq:robustRelaxation} is a quadratic program which has the time complexity $O(T^6(n(n+m))^3)$.

Compared with the previous SLS-based robust optimization relaxations~\cite{dean2019safely} and tube-based robust MPC~\cite{langson2004robust}, problem~\eqref{eq:robustRelaxation} significantly reduces the conservativeness by separately bounding the effects of the disturbance $\tilde{\ww}$, which can then be handled near optimally leveraging tools from $\mathcal{L}_1$ robust control~\cite{Dahleh1994ControlOU}, the effects of the initial condition $x_0$, and by introducing hyperparameters $\beta$ and $\gamma$ to accommodate a wide range of $\lVert x_0 \rVert_\infty$ sizes. In Section~\ref{sec:methodscomparison}, we provide a numerical example demonstrating these claims, and further showing that despite the introduction of grid searches over three hyperparameters, our method is still computationally more efficient and less conservative than tube MPC.


\section{Numerical Examples}
\label{sec:examples}

We consider robust MPC of a double integrator system \footnote{We note that the double integrator is a standard example system used to validate robust MPC methods subject to both additive disturbance and model uncertainty~\cite{mayne2005robust, schuurmans2000robust, kouvaritakis2000efficient}. That all papers in the literature are limited to such small scale examples~\cite{mayne2005robust, schuurmans2000robust,kouvaritakis2000efficient, kothare1996robust, lee2000robust, bujarbaruah2019semi, lorenzen2019robust} highlights the difficulty of the problem, and why it is still an active area of research.} as an illustrating example where both the SLS approach and the tube MPC approach~\cite{langson2004robust} are implemented in Section~\ref{sec:numericalsetup}. In Section~\ref{sec:methodscomparison}, a detailed discussion of the SLS, tube MPC, and dynamic programming approaches is conducted. Furthermore, we will demonstrate that our proposed relaxation~\eqref{eq:robustRelaxation} significantly reduces the conservativeness compared with the existing SLS relaxations in literature. All the simulation is implemented in MATLAB R$2018$a with YALMIP~\cite{lofberg2004yalmip} and MOSEK~\cite{andersen2000mosek} on an Intel i$7$-$6700$K CPU. All codes are publicly available at \url{https://github.com/unstable-zeros/robust-mpc-sls}.

\subsection{Robust MPC problem setup}
\label{sec:numericalsetup}
Consider the robust MPC problem~\eqref{eq:robustMPCprob} where the nominal system is given by
\begin{equation}\label{eq:example_1_dynamics}
\begin{aligned}
x_{t+1} & =  \hat{A} x_t + \hat{B} u_t + w_t = \begin{bmatrix} 1 & 1 \\ 0 & 1  \end{bmatrix} x_t + \begin{bmatrix} 0.5 \\ 1 \end{bmatrix} u_t + w_t.
\end{aligned}
\end{equation}
Although our approach can accommodate general LTV dynamics and uncertainty, we investigate an LTI double integrator system as our nominal system in this section to facilitate comparison to other methods in the literature. Since the tube MPC approach can only handle memoryless uncertainty, we let the uncertainty operator $\DDelta_A, \DDelta_B$ in~\eqref{eq:robustMPCprob} be memoryless, as in Remark~\ref{remark:memoryless}. The uncertainty level is set as $\epsA = 0.02, \epsB = 0.05$, corresponding to up to $5\%$ parametric uncertainty in $\hat{B}$ and up to $2\%$ parametric uncertainty in $\hat{A}$. The nominal cost function $J_{nom}$ is defined by $Q = I_2$, $R = 0.1$, and $Q_T = Q$. 

The state and input constraints are given as:
\begin{equation} 
\begin{aligned}
& \mathcal{X} = \Big\lbrace x \in \mathbb{R}^2 \mid \begin{bmatrix} -10 \\ -10 \end{bmatrix} \leq x \leq 
\begin{bmatrix} 10 \\ 10 \end{bmatrix}
\Big\rbrace, \\
& \mathcal{U} = \Big\lbrace u \in \mathbb{R} \mid -2 \leq x \leq 2
\Big\rbrace. \\
\end{aligned}
\end{equation}
The additive disturbance is norm-bounded by $w_t \in \mathcal{W} = \Big\lbrace w \in \mathbb{R}^2 \mid \lVert w \rVert_\infty \leq 0.1
\Big\rbrace$ and the terminal set $\mathcal{X}_T$ is found to be the maximal robust forward invariant set of system~\eqref{eq:example_1_dynamics} through dynamic programming~\cite{borrelli2017predictive}. We plot $\mathcal{X}_T$ as the green region and $\mathcal{X}$ as the purple region in Fig.~\ref{fig:Zunit}. By definition of $\mathcal{X}_T$, any $x_0$ outside $\mathcal{X}_T$ can not be robustly steered into $\mathcal{X}_T$ under any admissible control actions. 

%
The \emph{tube MPC} approach proposed in~\cite{langson2004robust} aims to bound the spread of trajectories under uncertainty by a sequence of polytopic tubes of different sizes. The tube shape $Z = \text{conv}\{z_1, \cdots, z_V\}$, which is the convex hull of $V$ vertices, has to be fixed beforehand. In our example, we test two kinds of tubes: one is the unit ball with $\infty$-norm $Z_{\text{unit}} = \{\lVert x \rVert_\infty \leq 1 \}$, and the other is the disturbance invariant set $Z_{\text{inv}} = \sum_{i = 1}^\infty (\hat{A} + \hat{B}K)^i \mathcal{W}$ for the closed-loop system $x_{t+1} = (\hat{A} + \hat{B}K)x_t + w_t$, where $K$ is LQR controller of the system $(\hat{A}, \hat{B})$ with the weight matrices $Q$ and $R$. The sum in the definition of $Z_{\text{inv}}$ is the Minkowski sum of sets~\cite{borrelli2017predictive}. For the details of implementing tube MPC, please see~\cite{langson2004robust}.

We highlight that the performance of tube MPC depends on the prefixed shape of the tube. In Fig.~\ref{fig:tubetraj}, we fix $T = 6$, $x_0 = [-7 \ -2]^\tp$, and simulate $10$ trajectories with randomly generated uncertainty parameters of tube MPC applied on system~\eqref{eq:example_1_dynamics} with different tubes $Z_{\text{unit}}$ and $Z_{\text{inv}}$. It is observed that with $Z_{\text{unit}}$, the trajectories prediction diverges and tube MPC becomes infeasible for horizon $T > 7$, while with $Z_{\text{inv}}$, tube MPC remains feasible for large horizons.

\begin{figure} [hbt!]
	\centering
	\begin{subfigure}{0.49 \columnwidth}\hfil
		\includegraphics[width= \linewidth]{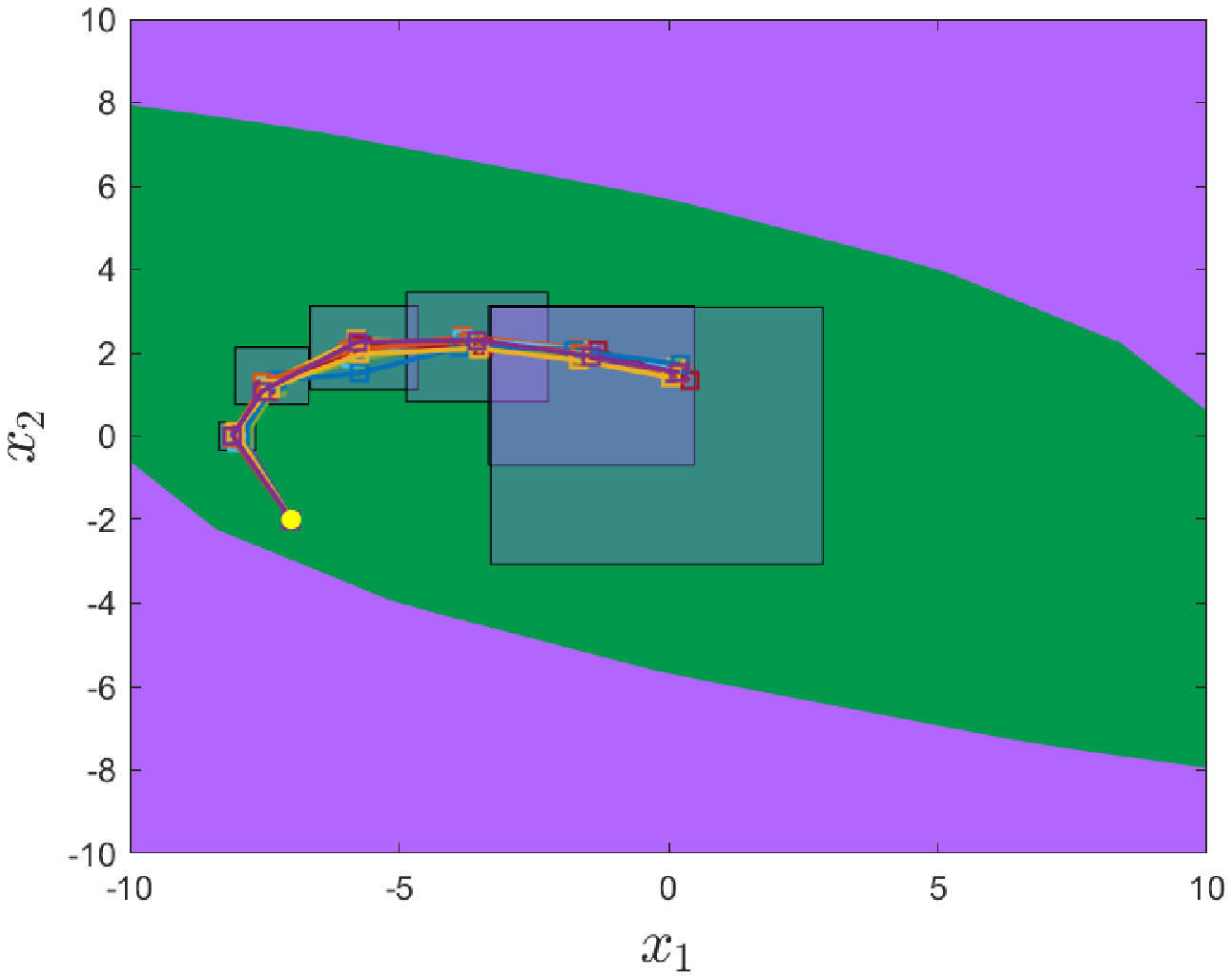}
		\caption{Tube MPC with $Z_{\text{unit}}$.}
		\label{fig:Zunit}
	\end{subfigure}
	\begin{subfigure}{0.49 \columnwidth}
		\includegraphics[width= \linewidth]{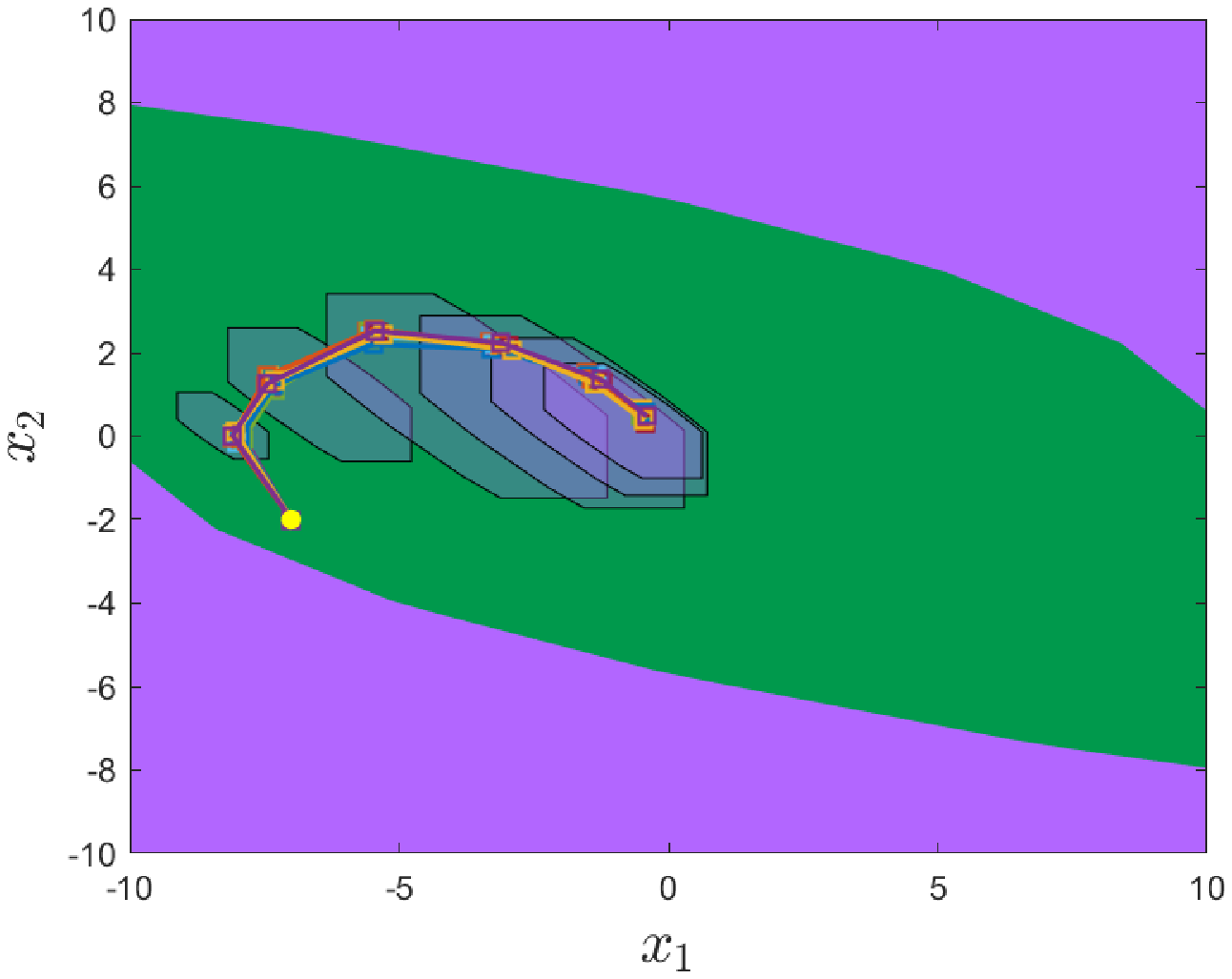}
		\caption{Tube MPC with $Z_{\text{inv}}$}
		\label{fig:Zinv}
	\end{subfigure}
	\caption{ Simulated trajectories of tube MPC with $Z_{\text{unit}}$ (left) and $Z_{\text{inv}}$ (right). The initial condition is marked by the yellow dot. The terminal constraint $\mathcal{X}_T$ is represented by the green region while the state constraint $\mathcal{X}$ is marked by the purple region.}
	\label{fig:tubetraj}
\end{figure}


\subsection{Methods comparison}
\label{sec:methodscomparison}
In this section, we compare the robust SLS and tube MPC approaches to solving robust OCPs as in Problem~\ref{prob:robustConstrControl}, with a focus on handling norm-bounded uncertainty $\DDelta_A$ and $\DDelta_B$.  We use the dynamic programming (DP) solution, which is well known to be computationally intractable~\cite{wang2009fast, borrelli2017predictive}, as a ground truth for comparison, and end the section with a brief discussion comparing our results to previous SLS based relaxatiosn of the robust OCP Problem~\ref{prob:robustConstrControl}.

\textbf{Dynamics and uncertainty assumptions:} All methods can handle LTV nominal dynamics.  DP and tube MPC methods can accommodate model uncertainty $\DDelta_A, \DDelta_B$ in the convex hull of a finite number of matrices. This generality comes at the cost of requiring vertex enumeration of the uncertainty polytope when formulating their optimization problems, which can lead to an exponential increase in the number of constraints as the system dimension increases, such as when the uncertainty is simply norm-bounded as in~\eqref{eq:robustMPCprob}. The SLS approach does not use vertex enumeration and the number of constraints grows approximately as $T(T+1)n^2/2$ - however, as of yet, it can only accommodate $\ell_\infty \rightarrow \ell_\infty$ norm bounded uncertainty model.

Although both DP and tube MPC allow model uncertainty to be time-variant, they can only handle memoryless uncertainty, i.e., $\DDelta_A, \DDelta_B$ must be block diagonal. SLS, on the other hand, can be applied even when $\DDelta_A, \DDelta_B$ are general LTV operators. 

\textbf{Complexity analysis:} 
Both robust SLS MPC~\eqref{eq:robustRelaxation} and tube MPC have a quadratic programming formulation with the number of variables listed in Table~\ref{table:compare}. The number of decision variables in the SLS approach is quadratic in the horizon $T$ and the state dimension $n$, while in tube MPC, it is linear in $T$ and $n$. However, when considering $\ell_\infty \rightarrow \ell_\infty$ norm-bounded uncertainty, the number of constraints grows exponentially with the state and input dimensions due to vertex enumeration. Furthermore, the number of vertices $V$ of the tube $Z$ and the number of the inequality constraints $H$ representing $Z$ act as a multiplier to the number of constraints as well.

In Fig.~\ref{fig:runningtime}, we plot the solver time of tube MPC with $Z_{\text{inv}}$ (blue) and robust SLS MPC (red) in solving the robust OCP of system~\eqref{eq:example_1_dynamics} with $x_0 = [-7 \ -2]^\tp$ and varying horizons. In tube MPC, we have $V = 20$ and $H = 20$ for $Z_{\text{inv}}$. In robust SLS MPC, we let $\epsilon_{tol} = 0.01$ and searching ranges of $(\tau, \gamma, \beta)$ be $[0, 10] \times [0, 10] \times [0, 10]$ in Algorithm~\ref{alg:bisection}. Then a $3 \times 3 \times 3$ grid search for feasible $(\tau, \gamma, \beta)$ is applied, followed by solving the quadratic program~\eqref{eq:robustRelaxation} with the found feasible hyperparameters. Note that for robust SLS MPC, the plotted value is the sum of all the solver times in the execution of Algorithm~\ref{alg:bisection}, the grid search stage, and the solving of~\eqref{eq:robustRelaxation}. Fig.~\ref{fig:runningtime} shows that even with the additional overhead incurred by the hyperparameter grid search, robust SLS MPC is computationally more efficient than tube MPC over a wide range of horizons in this robust MPC example.

\begin{figure}
	\centering
	\includegraphics[width = 0.6\columnwidth]{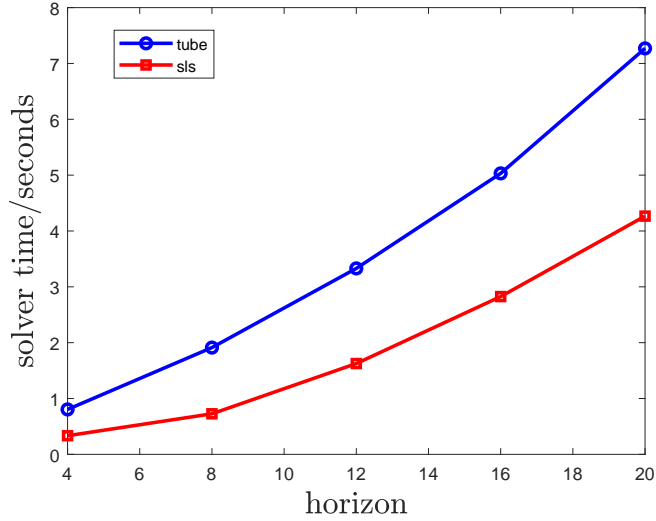}
	\caption{The solver time of running tube MPC (blue) and robust SLS MPC~\eqref{eq:robustRelaxation} (red) with different horizons. }
	\label{fig:runningtime}
\end{figure} 

\textbf{Conservativeness evaluation:} We evaluate the conservativeness of the SLS and tube MPC methods in terms of the feasibility of the initial conditions in the maximum robust forward invariant set $\mathcal{X}_T$ found through DP. We fix a horizon of $T = 10$ and use a grid over $x_0 \in \mathcal{X}_T$, as shown in Fig.~\ref{fig:slsfeasibility}. Using the SLS approach (Fig.~\ref{fig:slsfeasibility}), for each $x_0$, Algorithm~\ref{alg:bisection} is applied first with $\epsilon_{tol} = 0.01$ and searching ranges as $[0, 10] \times [0, 10] \times [0, 10]$, followed by a $ 3 \times 3 \times 3$ grid search of $(\tau, \gamma, \beta)$ to determine the hyperparameters. We mark $x_0$ as feasible (yellow square) if problem~\eqref{eq:robustRelaxation} is feasible with some tuple of hyperparameters and infeasible (red dots) if~\eqref{eq:robustRelaxation} is verified as infeasible by Algorithm~\ref{alg:bisection}. Unverified $x_0$'s (white asterisk) are those for which a feasible tuple of hyperparameters have not been found during the grid search. It is observed that the feasibility region of robust SLS MPC is close to the exact region (green).

In Fig.~\ref{fig:tube_feasibility}, we evaluate the conservativeness of tube MPC with $Z_{\text{unit}}$ and $Z_{\text{inv}}$. Fig.~\ref{fig:Zunit_feasibility} shows that with $Z_{\text{unit}}$, tube MPC tends to be overly conservative for large horizons. With $Z_{\text{inv}}$, on the other hand, the feasibility region is close to the exact region (Fig.~\ref{fig:Zinv_feasibility}). Thus, the conservativeness of tube MPC largely depends on the choice of the tube parameterization, while robust SLS MPC does not have such a dependency.


 \begin{figure} [hbt!]
 	\centering
 	\begin{subfigure}{0.49 \columnwidth}\hfil
 		\includegraphics[width= \linewidth]{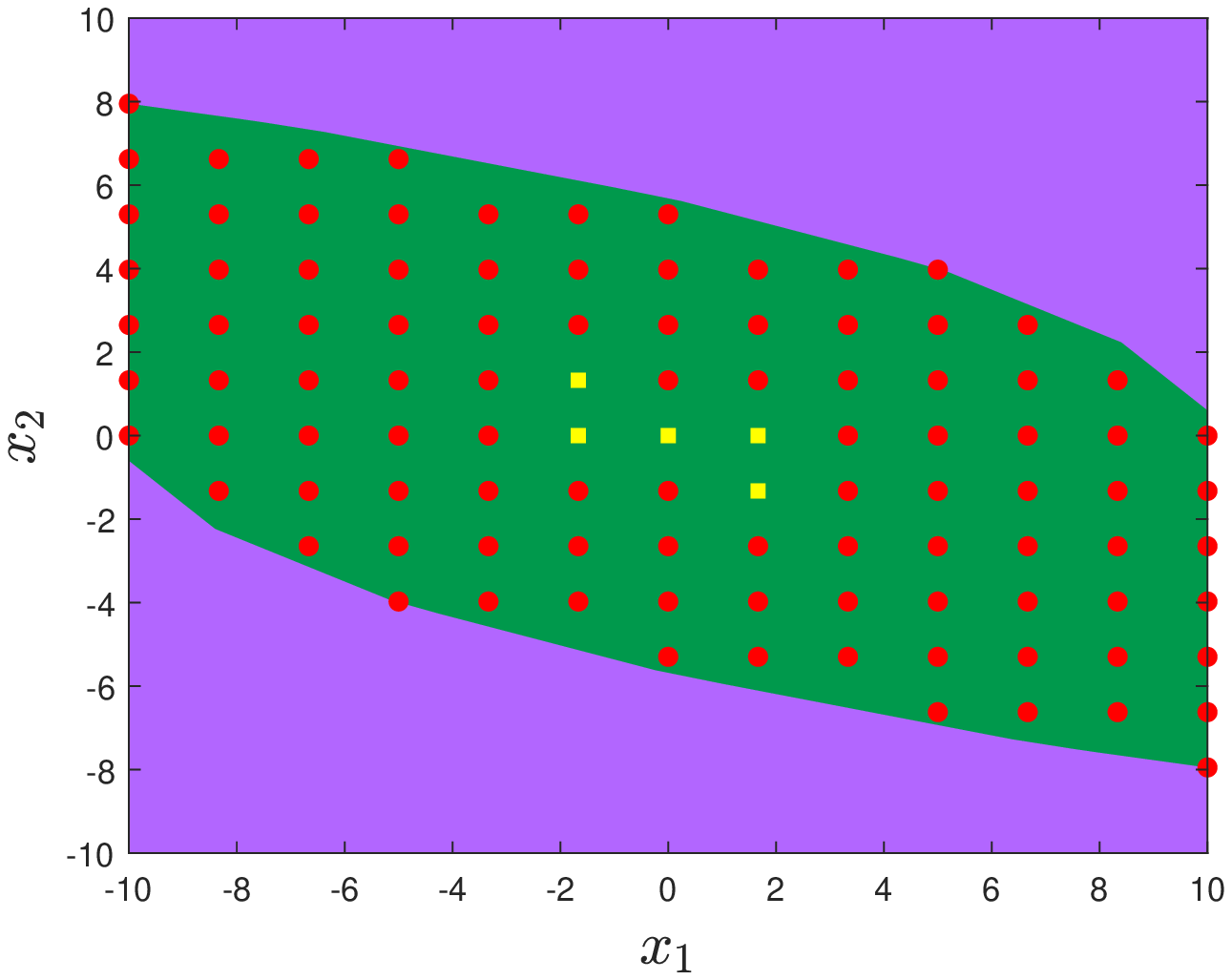}
 		\caption{Tube MPC with $Z_{\text{unit}}$.}
 		\label{fig:Zunit_feasibility}
 	\end{subfigure}
 	\begin{subfigure}{0.49 \columnwidth}
 		\includegraphics[width= \linewidth]{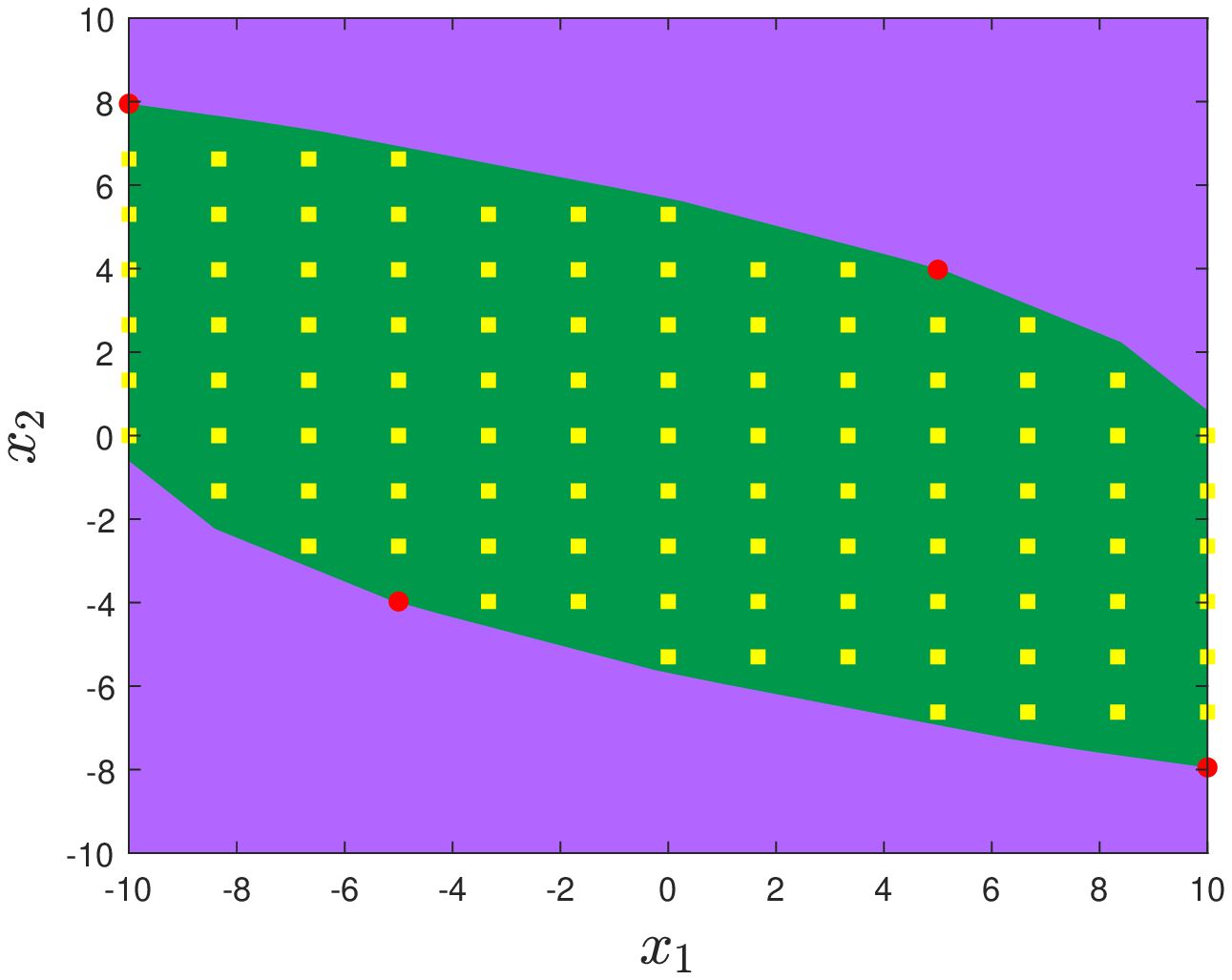}
 		\caption{Tube MPC with $Z_{\text{inv}}$}
 		\label{fig:Zinv_feasibility}
 	\end{subfigure}
 	\caption{Feasibility of the initial conditions in tube MPC with $Z_{\text{unit}}$ (left) and $Z_{\text{inv}}$ (right) for horizon $T = 10$. Yellow: feasible $x_0$'s. Red: infeasible $x_0$'s.}
 	\label{fig:tube_feasibility}
 \end{figure}
 
 \begin{figure} [hbt!]
	\centering
	\begin{subfigure}{0.49 \columnwidth}\hfil
		\includegraphics[width= \linewidth]{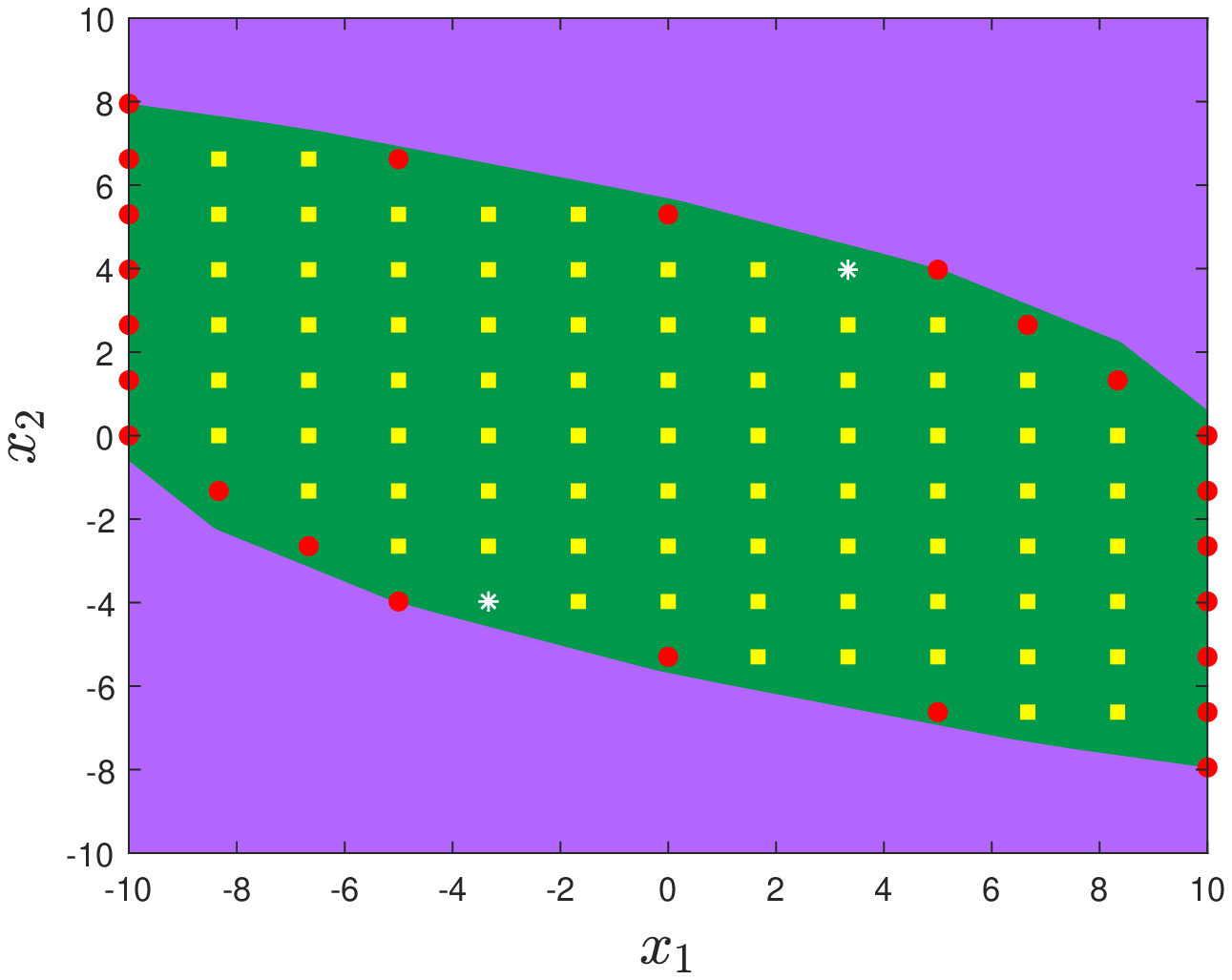}
		\caption{Robust SLS MPC.}
		\label{fig:slsfeasibility}
	\end{subfigure}
	\begin{subfigure}{0.49 \columnwidth}
		\includegraphics[width= \linewidth]{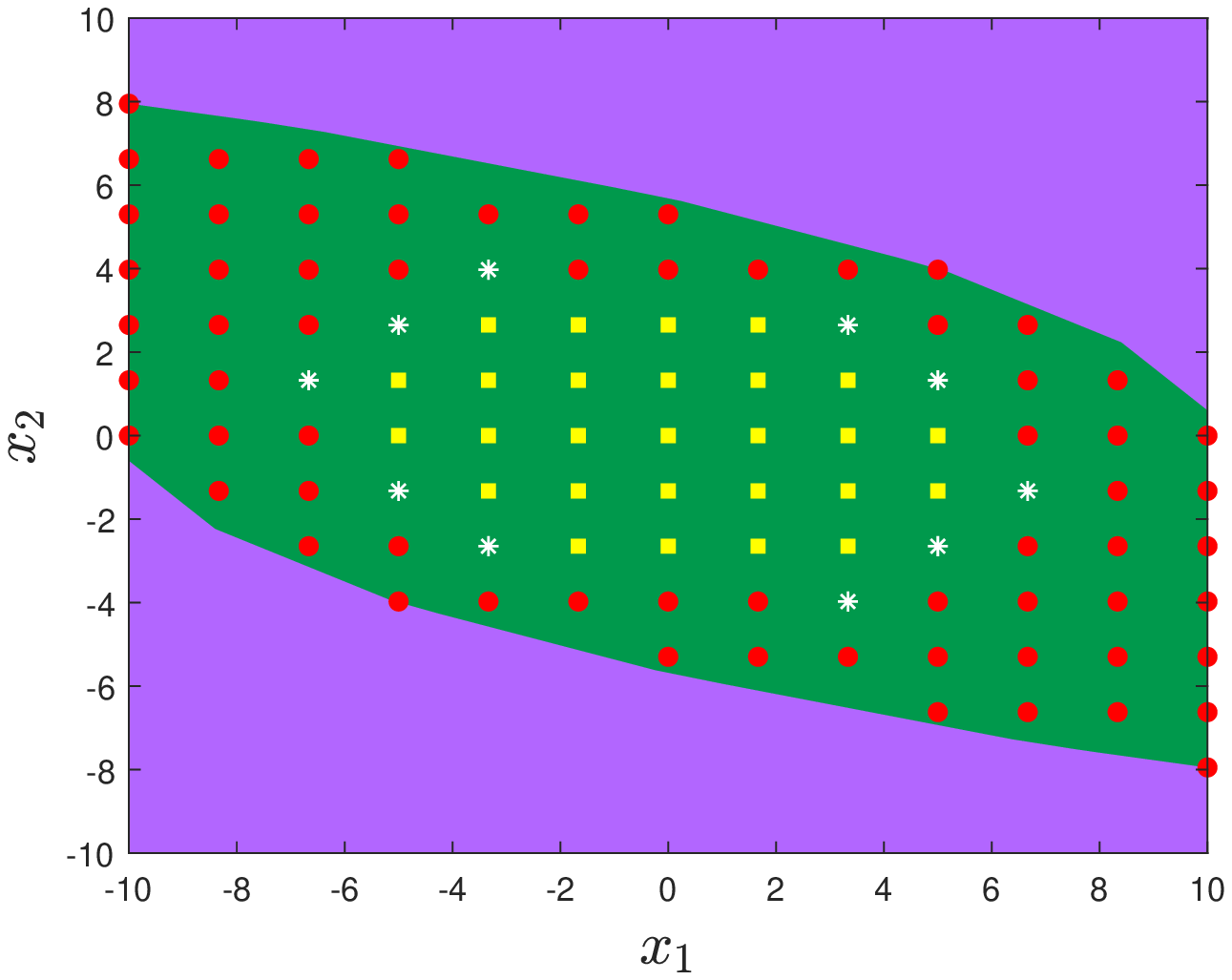}
		\caption{Coarse SLS MPC.}
		\label{fig:coarseslsfeasibility}
	\end{subfigure}
	\caption{Feasibility of the initial conditions in robust SLS MPC (left) and coarse SLS MPC (right) with horizon $T = 10$. Yellow: feasible $x_0$'s. Red: infeasible $x_0$'s. White: unverified $x_0$'s in the SLS relaxation.}
	\label{fig:coarse_sls_compare}
\end{figure}


\textbf{Comparison with existing SLS relaxations:} In the existing SLS literature~\cite{dean2019safely}, a coarser relaxation to that proposed in~\eqref{eq:robustRelaxation} has been used to solve robust OCPs, which we refer to as the ``coarse SLS'' approach. In this approach, there exists only one hyperparameter $\tau$, and we evaluate its conservativeness in Fig.~\ref{fig:coarseslsfeasibility} following the same procedure as in evaluating the SLS relaxation~\eqref{eq:robustRelaxation}. As shown, the SLS relaxation~\eqref{eq:robustRelaxation} in our paper considerably improves on the coarse SLS relaxation, while introducing only a small amount of additional computational overhead due to searching over the extra two hyperparameters $(\gamma, \beta)$, as described in Algorithm~\ref{alg:bisection}.

\section{Conclusion}
\label{sec:conclusion}


We proposed an SLS based approach for the robust MPC of LTV systems with norm bounded additive disturbances and LTV dynamic model uncertainty. A robust LTV state feedback controller is synthesized through SLS by optimizing over the closed-loop system responses. Computationally efficient convex relaxations of the robust optimal control problems are formulated. Simulation results indicate that the proposed framework achieves a favorable balance between conservativeness and computational complexity as compared with dynamic programming, tube MPC, and prior robust SLS based approaches.

\begin{appendices}
\section{}\label{appen:proof}

\begin{proof}[\textbf{Proof of Theorem}~\ref{thm:robustPerformance}]
	Recall that $\DDelta_c = \{ \DDelta \in \mathcal{L}_{TV} \mid \lVert \DDelta \rVert_\indinf \leq 1, \DDelta \text{ is causal}  \}$. Consider matrices
	\begin{equation} \label{eq:tildedelta}
	\MM = \begin{bmatrix}
	\XX & \XX \\ \YY & \YY
	\end{bmatrix},\quad 
	\widetilde{\DDelta} = \begin{bmatrix}
	\DDelta_1 & \\ & \DDelta_2
	\end{bmatrix}
	\end{equation}
	where $\XX \in \mathbb{R}^{p_1 \times q}, \YY \in \mathbb{R}^{p_2 \times q}, \DDelta_1 \in \mathbb{R}^{q \times p_1}, \DDelta_2 \in \mathbb{R}^{q \times p_2}, \DDelta_1, \DDelta_2 \in \DDelta_c$. Next we want to show that 
	\begin{equation*}
	\lVert \XX \rVert_\indinf + \lVert \YY \rVert_\indinf < 1 \Rightarrow (I - \MM\widetilde{\DDelta})^{-1} \ \text{exists} \ \forall \widetilde{\DDelta}.
	\end{equation*} 
	For $\forall d_1, d_2 > 0$, define 
	\begin{equation*}
	\begin{aligned}
	& D_p = \begin{bmatrix}
	d_1 I_{p_1} & \\ & d_2 I_{p_2}
	\end{bmatrix}, \quad
	D_p^{-1} = \begin{bmatrix}
	\frac{1}{d_1} I_{p_1} & \\ & \frac{1}{d_2} I_{p_2}
	\end{bmatrix}, \\
	& D_q = \begin{bmatrix}
	d_1 I_{q} & \\ & d_2 I_{q}
	\end{bmatrix}, \quad
	D_q^{-1} = \begin{bmatrix}
	\frac{1}{d_1} I_{q} & \\ & \frac{1}{d_2} I_{q}
	\end{bmatrix}.
	\end{aligned}
	\end{equation*}
	It follows that $D_q^{-1} \widetilde{\DDelta} = \widetilde{\DDelta} D_p^{-1}$. Since $D_p(I - \MM \widetilde{\DDelta})D_p^{-1} = I - D_p \MM \widetilde{\DDelta} D_p^{-1} = I - D_p \MM D_q^{-1} \widetilde{\DDelta}$, we have $I - \MM \widetilde{\DDelta}$ is invertible if and only if $I - D_p \MM D_q^{-1} \widetilde{\DDelta}$ is invertible for some $d_1, d_2 > 0$. Substitute $D_p, D_q^{-1}$ and we have 
	\begin{equation*}
	D_p \MM D_q^{-1} =  \begin{bmatrix}
	\XX & \frac{d_1}{d_2} \XX \\ \frac{d_2}{d_1} \YY & \YY 
	\end{bmatrix} = 
	\begin{bmatrix}
	\XX & \lambda \XX \\ \frac{1}{\lambda } \YY & \YY
	\end{bmatrix}.
	\end{equation*}
	with $\lambda = \frac{d_1}{d_2} > 0$. Then $\lVert D_p \MM D_q^{-1} \rVert_\indinf \leq \max\{\lVert \XX \rVert_\indinf + \lambda \lVert \XX \rVert_\indinf, \frac{1}{\lambda} \lVert \YY \rVert_\indinf + \lVert \YY \rVert_\indinf \}$. Taking the infimum over $\lambda$ (or equivalently $d_1, d_2$), we have
	\begin{equation*}
	\begin{aligned}
	&\quad \ \inf_{\lambda} \ \lVert D_p \MM D_q^{-1} \rVert_\indinf \\
	&\leq \inf_\lambda \max \lbrace \lVert \XX \rVert_\indinf + \lambda \lVert \XX \rVert_\indinf, \\
	& \qquad \qquad \qquad \frac{1}{\lambda} \lVert \YY \rVert_\indinf + \lVert \YY \rVert_\indinf \rbrace \\
	& = \lVert \XX \rVert_\indinf + \lVert \YY \rVert_\indinf,
	\end{aligned}
	\end{equation*}
	where the infimum is achieved by $\lambda = d_1 /d_2 = \lVert \YY \rVert_\indinf / \lVert \XX \rVert_\indinf$. Thus 
	\begin{equation*}
	\begin{aligned}
	& \quad \ \lVert \XX \rVert_\indinf + \lVert \YY \rVert_\indinf < 1 \\
	& \Rightarrow \inf_{d_1, d_2} \ \lVert D_p \MM D_q^{-1} \rVert_\indinf < 1 \\
	& \Rightarrow (I - \MM\widetilde{\DDelta})^{-1} \ \text{exists for all } \widetilde{\DDelta} \text{ in \eqref{eq:tildedelta}}
	\end{aligned}
	\end{equation*}
	by the small gain theorem~\cite{dullerud2013course}. 
	By proposition~\ref{prop:equivalence}, we have 
	\begin{equation} \label{eq:sufficientRobustPerf}
	\begin{aligned}
	& \quad \ \lVert \XX \rVert_\indinf + \lVert \YY \rVert_\indinf < 1 \\
	& \Rightarrow \lVert \XX + \XX \DDelta_1(I - \YY \DDelta_1)^{-1} \YY  \rVert_\indinf < 1, \forall \DDelta_1 \in \DDelta_c.
	\end{aligned}
	\end{equation}
	Let
	\begin{equation*}
	\MM = \begin{bmatrix}
	\MM_{11} & \MM_{12} \\ \MM_{21} & \MM_{22}
	\end{bmatrix} = 
	\begin{bmatrix}
	\frac{1}{\beta} \mathbf{P} \hat{\PPhi} & \frac{1}{\beta} \mathbf{P} \hat{\PPhi} \\ \epsilon \hat{\PPhi} & \epsilon \hat{\PPhi}
	\end{bmatrix}.
	\end{equation*}
	Apply~\eqref{eq:sufficientRobustPerf} and we prove the theorem.
\end{proof}

\begin{proof}[\textbf{Proof of Theorem}~\ref{thm:robustConvexRelaxation}]
Since $Z$ is a block-downshift operator, from the definitions in~\eqref{eq:defPhiDelta} and~\eqref{eq:matStructure}, we have $\DeltaVar^0 = 0$ and $\PPhi \DeltaVar = \PPhi^0 \DeltaVar^0 + \PPhi^\tildeww \DeltaVar^\tildeww = \PPhi^\tildeww \DeltaVar^\tildeww$. Then the robust constraint in~\eqref{eq:robustMPCprob} can be written as 
\begin{equation} \label{eq:robustConstr}
\begin{aligned}
F_j \begin{bmatrix}
\xx \\ \uu
\end{bmatrix} & = F_j^\tp \PPhi \ww + F_j^\tp \PPhi \DeltaVar( I - \PPhi \DeltaVar)^{-1} \PPhi \ww \\
& = F_j^\tp \PPhi^0 x_0 + F_j^\tp \PPhi^\tildeww \DeltaVar^\tildeww (I - \PPhi^\tildeww \DeltaVar^\tildeww)^{-1} \PPhi^0 x_0 \\
& \quad + F_j^\tp(\PPhi^\tildeww + \PPhi^\tildeww \DeltaVar^\tildeww(I - \PPhi^\tildeww \DeltaVar^\tildeww)^{-1} \PPhi^\tildeww) \tildeww \\
& \leq b_j
\end{aligned}
\end{equation}
for all $j$ indexing the $j$-th row of $F$ and $b$ and for $\forall w_t \in \mathcal{W},  \forall \lVert {\Delta_A}_t \rVert_\indinf \leq \epsilon_A, \forall \lVert {\Delta_B}_t \rVert_\indinf \leq \epsilon_B$. Next, we upperbound the term $\circled{1} = F_j^\tp(\PPhi^\tildeww + \PPhi^\tildeww \DeltaVar^\tildeww(I - \PPhi^\tildeww \DeltaVar^\tildeww)^{-1} \PPhi^\tildeww) \tildeww$ and the term $\circled{2} =  F_j^\tp \PPhi^\tildeww \DeltaVar^\tildeww (I - \PPhi^\tildeww \DeltaVar^\tildeww)^{-1} \PPhi^0 x_0$ separately. 

\textbf{Bounding $\circled{1}$}: By the submultiplicativity of the matrix induced infinity-norm, we can upperbound term $\circled{1}$ by 
\begin{equation}
\begin{aligned}
& \quad F_j^\tp(\PPhi^\tildeww + \PPhi^\tildeww \DeltaVar^\tildeww(I - \PPhi^\tildeww \DeltaVar^\tildeww)^{-1} \PPhi^\tildeww) \tildeww  \\
& \leq \lVert F_j^\tp(\PPhi^\tildeww + \PPhi^\tildeww \DeltaVar^\tildeww(I - \PPhi^\tildeww \DeltaVar^\tildeww)^{-1} \PPhi^\tildeww) \tildeww \rVert_\infty \\
& \leq \lVert F_j^\tp(\PPhi^\tildeww + \PPhi^\tildeww \DeltaVar^\tildeww(I - \PPhi^\tildeww \DeltaVar^\tildeww)^{-1} \PPhi^\tildeww) \rVert_\indinf \sigma_w. 
\end{aligned}
\end{equation}
It can be easily checked that $\lVert \DeltaVar^\tildeww \rVert_\indinf \leq \epsA + \epsB = \epsilon$ and $(I - \PPhi^\tildeww \DeltaVar^\tildeww)^{-1}$ exists. By Theorem~\ref{thm:robustPerformance}, the following two statements are equivalent:
\begin{equation}
\begin{aligned}
& (i) \ \lVert F_j^\tp(\PPhi^\tildeww + \PPhi^\tildeww \DeltaVar^\tildeww(I - \PPhi^\tildeww \DeltaVar^\tildeww)^{-1} \PPhi^\tildeww) \rVert_\indinf \leq \beta_j. \\
& (ii) \lVert F_j^\tp \PPhi^\tildeww \rVert_\indinf + \beta_j \lVert \epsilon \PPhi^\tildeww \rVert_\indinf \leq \beta_j.
\end{aligned}
\end{equation}
Then $\circled{1} \leq \beta_j \sigma_w$ if $\lVert F_j^\tp \PPhi^\tildeww \rVert_\indinf + \beta_j \lVert \epsilon \PPhi^\tildeww \rVert_\indinf \leq \beta_j$. To reduce the number of hyperparameters, we replace $\beta_j$ by a uniform $\beta$ for all $j$. This still guarantees the robustness of the constraints in~\eqref{eq:robustRelaxation} and can be seen as a tradeoff between computational complexity and conservativeness. 

\textbf{Bounding $\circled{2}$}:
Since $\DeltaVar = \left[ \DeltaVar_A \mid \DeltaVar_B \right]$ and $\DeltaVar_A, \DeltaVar_B$ are strict block-lower-triangular matrices, we have $(\PPhi^\tildeww \DeltaVar^\tildeww)^{T+1} = 0$ and $(I - \PPhi^\tildeww \DeltaVar^\tildeww)^{-1} = \sum_{k = 0}^T (\PPhi^\tildeww \DeltaVar^\tildeww)^k$. Then we can bound term $\circled{2}$ as:
\begin{align} \label{eq:term2}
& \quad F_j^\tp \PPhi^\tildeww \DeltaVar^\tildeww (I - \PPhi^\tildeww \DeltaVar^\tildeww)^{-1} \PPhi^0 x_0 \nonumber \\
& = F_j^\tp \sum_{k = 1}^T (\PPhi^\tildeww \DeltaVar^\tildeww)^k \PPhi^0 x_0 \nonumber \\
& = F_j^\tp \PPhi^\tildeww \sum_{k = 0}^{T-1} ( \DeltaVar^\tildeww \PPhi^\tildeww)^k \DeltaVar^\tildeww \PPhi^0 x_0 \nonumber\\
& \leq \lVert F_j^\tp \PPhi^\tildeww\rVert_1 \lVert     \sum_{k = 0}^{T-1} (\DeltaVar^\tildeww \PPhi^\tildeww)^k \DeltaVar^\tildeww \PPhi^0 x_0 \rVert_\infty \nonumber\\
& \leq \lVert F_j^\tp \PPhi^\tildeww\rVert_1 \lVert     \sum_{k = 0}^{T-1} (\DeltaVar^\tildeww \PPhi^\tildeww)^k \rVert_\indinf \lVert \DeltaVar^\tildeww \PPhi^0 x_0 \rVert_\infty \nonumber\\
& \leq \lVert F_j^\tp \PPhi^\tildeww\rVert_1      \sum_{k = 0}^{T-1} \lVert \DeltaVar^\tildeww \PPhi^\tildeww\rVert_\indinf^k  \lVert \DeltaVar^\tildeww \PPhi^0 x_0 \rVert_\infty \nonumber\\
& \leq \lVert F_j^\tp \PPhi^\tildeww\rVert_1 \frac{1 - \tau^T}{1 - \tau} \gamma
\end{align}
for any $\tau, \gamma > 0$ such that $\lVert \DeltaVar^\tildeww \PPhi^\tildeww  \rVert_\indinf \leq \tau$ and $\lVert \DeltaVar^\tildeww \PPhi^0 x_0 \rVert_\infty \leq \gamma$. The first inequality applies Holder's inequality while the rest inequalities are from the triangular inequality and submultiplicativity of the induced infinity-norm.

\textbf{Bounds with $\tau, \gamma$}: 
First we bound $\lVert \DDelta^\tildeww \PPhi^\tildeww  \rVert_\indinf $ by:
\begin{equation}
\begin{aligned}
& \quad \ \lVert \DeltaVar^\tildeww \PPhi^\tildeww \rVert_\indinf \\
& = \Big\lVert \begin{bmatrix} \DeltaVar^\tildeww_A \frac{\alpha}{\epsilon_A} & \DeltaVar^\tildeww_B \frac{1 - \alpha}{\epsilon_B} \end{bmatrix} \begin{bmatrix} \frac{\epsilon_A}{\alpha} \Phix^\tildeww \\ \frac{\epsilon_B}{1 - \alpha}\Phiu^\tildeww \end{bmatrix}  \Big\rVert_\indinf  \\
& \leq \Big \lVert \begin{bmatrix} \DeltaVar^\tildeww_A \frac{\alpha}{\epsilon_A} & \DeltaVar^\tildeww_B \frac{1 - \alpha}{\epsilon_B} \end{bmatrix} \Big \rVert_\indinf \Big \lVert \begin{bmatrix} \frac{\epsilon_A}{\alpha} \Phix^\tildeww \\ \frac{\epsilon_B}{1 - \alpha}\Phiu^\tildeww \end{bmatrix} \Big \rVert_{ \indinf} \\
& \leq (\frac{\alpha}{\epsilon_A} \lVert \DeltaVar_A^\tildeww \rVert_\indinf + \frac{1 - \alpha}{\epsilon_B} \lVert \DeltaVar_B^\tildeww \rVert_\indinf) \Big \lVert \begin{bmatrix} \frac{\epsilon_A}{\alpha} \Phix^\tildeww \\ \frac{\epsilon_B}{1 - \alpha}\Phiu^\tildeww \end{bmatrix} \Big \rVert_{ \indinf} \\
& \leq \Big \lVert \begin{bmatrix} \frac{\epsilon_A}{\alpha} \Phix^\tildeww \\ \frac{\epsilon_B}{1 - \alpha}\Phiu^\tildeww \end{bmatrix}  \Big\rVert_\indinf
\end{aligned}
\end{equation}
for any $\alpha \in (0,1)$. The first inequality follows from submultiplicativity and the second inequality holds because the induced infinity-norm is the maximum absolute row sum a matrix. For $\lVert \DDelta^\tildeww \PPhi^0 x_0 \rVert_\infty$ a similar upperbound can be obtained. Then the following conditions
\begin{equation}
\Big\lVert \begin{bmatrix} \frac{\epsilon_A}{\alpha} \Phix^\tildeww \\  \frac{\epsilon_B}{1 - \alpha} \Phiu^\tildeww \end{bmatrix} \Big\rVert_\indinf \leq \tau, \quad 
\Big\lVert \begin{bmatrix} \frac{\epsilon_A}{\alpha} \Phix^0 \\  \frac{\epsilon_B}{1 - \alpha} \Phiu^0 \end{bmatrix} x_0 \Big\rVert_\infty \leq \gamma
\end{equation}
guarantee that $\lVert \DDelta^\tildeww \PPhi^\tildeww  \rVert_\indinf \leq \tau$ and $\lVert \DDelta^\tildeww \PPhi^0 x_0 \rVert_\infty \leq \gamma $ hold for all $\lVert {\Delta_A}_t \rVert_\indinf \leq \epsA, \lVert {\Delta_B}_t \rVert_\indinf \leq \epsB$. Putting everything together, we prove Theorem~\ref{thm:robustConvexRelaxation}.
\end{proof}

\end{appendices}

\bibliographystyle{IEEEtran}
\bibliography{reference}

\end{document}